\documentclass[%12pt,
reqno]{amsart}

\usepackage{amsmath,amssymb,eufrak}%{amscd}
\usepackage[all]{xy}
\usepackage[active]{srcltx} %SRC Specials for DVI Searching
\usepackage{mathrsfs}

\newtheorem{thm}{Theorem}[section]
\newtheorem{lm}[thm]{Lemma}
\newtheorem{co}[thm]{Corollary}
\newtheorem{pr}[thm]{Proposition}

\theoremstyle{definition}

\newtheorem{df}[thm]{Definition}

\newtheorem{exm}[thm]{Example}

\newtheorem{rem}[thm]{Remark}

\numberwithin{equation}{section}

\newcommand*{\spn}{\mathrm{span}}

\newcommand{\ptn}{\mathbin{\widehat{\otimes}}}

\newcommand*{\ptens}[1]{\mathop{\widehat\otimes}_{#1}}

\newcommand{\sgn}{\mathop{\mathrm{sgn}}\nolimits}

\newcommand*{\lar}{\leftarrow}

\newcommand{\CC}{\mathbb{C}}

\newcommand{\R}{\mathbb{R}}
\newcommand{\N}{\mathbb{N}}
\newcommand{\Z}{\mathbb{Z}}

\newcommand{\cO}{\mathcal{O}}

\newcommand{\fg}{\mathfrak{g}}
\newcommand*{\fv}{\mathfrak{v}}

\renewcommand{\le}{\leqslant}
\renewcommand{\ge}{\geqslant}

\let \al         =\alpha
\let \be         =\beta
\let \ga         =\gamma
\let \de         =\delta

\let \te         =\theta
\let \io         =\iota

\let \la         =\lambda
\let \si         =\sigma
\let \up         =\upsilon
\let \om         =\omega

\let \phi         =\varphi

\title[Nilpotent Lie algebras and homological epimorphisms]
{Arens-Michael envelopes of nilpotent Lie algebras, holomorphic
functions of exponential type, and homological epimorphisms}
\subjclass[2010]{17B30, 17B35, 22E30, 22E25,  32A38, 46M18, 46F05}
\keywords{Nilpotent Lie algebra, Arens-Michael envelope,
Holomorphic function of exponential type, Homological epimorphism,
Submultiplicative weight, Length function}
\author{O. Yu. Aristov}
\thanks{This work was supported by the RFBR grant no. 19-01-00447.}
\email{aristovoyu@inbox.ru}

\begin{document}
\begin{abstract} %(150-250 words!)
Our aim is to give an explicit description of the Arens-Michael
envelope for the universal enveloping algebra of a
finite-dimensional nilpotent complex Lie algebra. It turns out
that the Arens-Michael envelope belongs to a class of completions
introduced by R.~Goodman in 70s.  To find a precise form of this
algebra we preliminary characterize the set of holomorphic
functions of exponential type on a simply connected nilpotent
complex Lie group. This approach leads to unexpected connections
to Riemannian geometry and the theory of order and type for
entire functions.

As a corollary, it is shown that the Arens-Michael envelope
considered above is a homological epimorphism. So we get a
positive answer to a question investigated earlier by Dosi and
Pirkovskii.
\end{abstract}
 \maketitle

 \section{Introduction}
A research program on Arens-Michael envelopes and homological
epimorphisms was initiated by Joseph Taylor in his seminal papers
\cite{T1,T2}.  Taylor was inspired by his previous results on
multi-operator holomorphic functional  calculus and some
consideration that can be incorporated in those circle of ideas
that is called Noncommutative Geometry nowadays. Homological
epimorphisms play important role in modern attempts to find
generalizations of Taylor spectrum and functional calculus for
noncommutative algebras (see \cite{Do10A,Do10B}).

We pursue two aims: to get an explicit description of the
Arens-Michael envelope for $U(\fg)$ and prove that it is a
homological epimorphism,  both for a  finite-dimensional nilpotent
complex Lie algebra $\fg$.  (Here $U(\fg)$ denotes the universal
enveloping algebra of $\fg$.)

\subsection*{Arens-Michael envelopes}
In this text, all vector spaces and algebras are considered over
the field $\CC$ of complex numbers. All algebras and their
homomorphisms are assumed to be unital.

Recall that  a complete Hausdorff locally convex topological
algebra with jointly continuous multiplication is called a
\emph{$\ptn$-algebra}. (Here $\ptn$ is the sign for the complete
projective tensor product of locally convex spaces.)  A~$\ptn$-
algebra A is called an \emph{Arens-Michael algebra} (or a
\emph{complete m-convex algebra}) if its topology is determined by
a family of submultiplicative prenorms $(\|\cdot\|_\al)$ (i.e.,
$\|ab\|_\al \le\|a\|_\al\|b\|_\al$ for all $a, b\in A$).

The \emph{Arens-Michael envelope} of a $\ptn$-algebra $A$
\cite[Chap.~5]{X2} is  a pair $(\widehat A, \io_A)$, where
$\widehat A$ is an Arens-Michael algebra  and $\io_A$ is a
continuous homomorphism $A \to \widehat A$ s.t. for any
Arens-Michael algebra $B$ and for each continuous homomorphism
$\phi\!: A \to B$ there exists a unique continuous homomorphism
$\widehat\phi\!:\widehat A \to B$ making the following diagram
commutative
\begin{equation}\label{AMen}
  \xymatrix{
A \ar[r]^{\io_A}\ar[rd]_{\phi}&\widehat A\ar@{-->}[d]^{\widehat\phi}\\
 &B\\
 }
\end{equation}
Note that it suffices to consider only homomorphisms with values
in Banach algebras. The Arens-Michael envelope always exists and
is unique up to a topological isomorphism. The algebra $\widehat
A$ is the completion of $A$ w.r.t. the family of all continuous
submultiplicative prenorms. (An  arbitrary associative
$\CC$-algebra can be considered as a $\ptn$-algebra w.r.t. the
strongest locally convex topology; in this case, all prenorms are
automatically continuous.)

To formulate our first main result recall some terminology and
notation from \cite{Go78,Go79,Pir_stbflat}.  Consider a
finite-dimensional nilpotent complex Lie algebra~$\fg$ and fix a
positive decreasing filtration $\mathscr{F}$ on~$\fg$, i.e., a
decreasing chain of subspaces
\begin{equation}\label{posfilt}
\fg=\fg_1 \supset \fg_2 \supset \cdots  \supset\fg_k \supset\fg_{k+1}=0,
\qquad [\fg_i,\fg_j]\subset \fg_{i+j}\,.
\end{equation}
Consider a basis $(e_1,\ldots, e_m)$ in $\fg$ and set
\begin{equation}\label{widef}
w_i\!:=\max\{j:\,e_i\in \fg_j \}\quad\text{and}\quad
w(\alpha)\!:=\sum_i w_i\alpha_i \,,
\end{equation}
where $\alpha=(\alpha_1,\ldots,\alpha_m)\in\Z_+^m$. In the
following  we assume that $(e_i)$ is  an
\emph{$\mathscr{F}$-basis}, i.e., $w_i\le w_{i+1}$  for all $i$,
and $\fg_j = \spn\{e_i : w_i \ge j\}$ for all $j$. A sequence
${\mathscr M}=\{ M_\alpha: \alpha\in\Z_+^m\}$ of positive numbers
is called an \emph{${\mathscr F}$-weight sequence} if $M_0=1$ and
$M_\gamma\le M_\alpha M_\beta$ whenever $w(\gamma)\ge
w(\alpha)+w(\beta)$. Given an ${\mathscr F}$-weight sequence
${\mathscr M}$, consider the space
\begin{multline}\label{UfgMde}
U(\fg)_{\mathscr M}\!:= \Bigl\{ x=\sum_{\alpha\in\Z_+^m} c_\alpha
e^\alpha\in [U] \!:\\
 \| x\|_r\!:=\sum_\alpha |c_\alpha|
\alpha!\, M_\alpha r^{w(\alpha)}<\infty \;\forall r>0\Bigr\}\,,
\end{multline}
 where $(e^\al\,:\al\in\Z_+^m)$ is the PBW-basis in
$U(\fg)$ associated with  $(e_i)$, and $[U]$ is the set of formal
power series w.r.t. $(e^\al)$. It is proved in
\cite[Th.~6.3]{Go78} that the multiplication on  $U(\fg)$ extends
to $U(\fg)_{\mathscr M}$ and $U(\fg)_{\mathscr M}$ is a
$\ptn$-algebra.

The standard choice for $\mathscr{F}$ is  the lower central
series of $\fg$ that is defined inductively by $\fg_i\!:=[\fg,
\fg_{i-1}]$. Consider the $\mathscr F$-weight sequence $\mathscr
M_1$ defined by $ M_\al\!:=w(\al)^{-w(\al)}$. (This is $\mathscr
M_p$ as defined in \cite[Sec.~2, Exm.~1]{Go79} with $p=1$.)

\begin{thm}\label{AMEdescrnil}
Let $\fg$ be a finite-dimensional nilpotent complex Lie algebra,
$\mathscr{F}$  the lower central series, and $(e_i)$ an
$\mathscr{F}$-basis.  Then the topology on the $\ptn$-algebra
$$
U(\fg)_{\mathscr M_1}= \Bigl\{ x=\sum_{\alpha\in\Z_+^m} c_\alpha
e^\alpha\in [U]\! : \| x\|_r=\sum_\alpha |c_\alpha| \alpha!\,
w(\al)^{-w(\al)} r^{w(\alpha)}<\infty \;\forall r>0\Bigr\}
$$
can be determined by system of submultiplicative prenorms, i.e.,
it is an Arens-Michael algebra. Moreover, the natural
homomorphism $U(\fg)\to U(\fg)_{\mathscr M_1}$ is an
Arens-Michael envelope.
\end{thm}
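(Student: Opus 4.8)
The plan is to establish the theorem in two logically separate halves. First I would show that $U(\fg)_{\mathscr M_1}$ is an Arens-Michael algebra, i.e., that its Fréchet topology, which is a priori generated by the prenorms $\|\cdot\|_r$, can equally well be generated by a family of \emph{submultiplicative} prenorms. The key point is a comparison estimate: I want to produce, for each $r>0$, a submultiplicative prenorm $\|\cdot\|'$ and constants $C, s$ with $\|x\|_r \le C\|x\|'$ and $\|x\|' \le \text{(something)}\cdot\|x\|_s$. The natural candidate for $\|\cdot\|'$ is a prenorm of the same shape but built from a genuine \emph{$\mathscr F$-weight sequence} whose submultiplicativity forces submultiplicativity of the associated prenorm on $U(\fg)_{\mathscr M}$; here I would lean on Goodman's results (\cite{Go78,Go79}) that give precisely when the $\ell^1$-type prenorm on $U(\fg)_{\mathscr M}$ is submultiplicative — roughly, when $M_\al r^{w(\al)}$ is itself (up to the combinatorial $\al!$ factors coming from the PBW multiplication) submultiplicative in $\al$ with respect to the grading $w$. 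The arithmetic heart is the inequality relating $w(\ga)^{-w(\ga)}$ to $w(\al)^{-w(\al)}w(\be)^{-w(\be)}$ when $w(\ga)\le w(\al)+w(\be)$, together with control of $\al!\,\be!/\ga!$ against a power of $w(\ga)$; this is exactly the ``exponential type'' phenomenon that, per the abstract, links to order and type of entire functions (the function $n\mapsto n^{-n}$ being the weight whose generating function has order $1$).

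Second, and this is where the real work lies, I would prove the universal property: that $\io\colon U(\fg)\to U(\fg)_{\mathscr M_1}$ is the Arens-Michael envelope. One inclusion is formal — $U(\fg)_{\mathscr M_1}$ is an Arens-Michael algebra receiving a homomorphism from $U(\fg)$ with dense range, so the induced map $\widehat{U(\fg)}\to U(\fg)_{\mathscr M_1}$ exists and is surjective with dense range; I must show it is a topological isomorphism, equivalently that \emph{every} continuous submultiplicative prenorm on $U(\fg)$ is dominated by some $\|\cdot\|_r$. So the task reduces to: given a Banach algebra $B$ and a homomorphism $\phi\colon U(\fg)\to B$ — equivalently a representation of $\fg$ by (bounded) operators, i.e., a Lie algebra homomorphism $\fg\to B$ — I need a bound $\|\phi(e^\al)\|_B \le C\,\al!\,w(\al)^{-w(\al)} r^{w(\al)}$ for some $r,C$ depending on $\phi$. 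Writing $\rho_i := \|\phi(e_i)\|_B$, submultiplicativity gives the crude bound $\|\phi(e^\al)\|_B \le \prod_i \rho_i^{\al_i}$, but that is far too weak — it does not supply the decaying factor $w(\al)^{-w(\al)}$. The extra decay must come from nilpotency: iterated brackets of length $>k$ vanish, so products of the $\phi(e_i)$ are subject to many commutation relations, and reordering a length-$n$ word into PBW form produces cancellations that improve the estimate. Concretely I expect to use an induction on the nilpotency class (or on the filtration length $k$), treating $\fg/\fg_k$ and the central piece $\fg_k$ separately, to show that $\phi$ extends continuously to $U(\fg)_{\mathscr M_1}$; the exponential-type description of holomorphic functions on the corresponding simply connected nilpotent Lie group (promised in the introduction) is the tool that packages this.

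The main obstacle, then, is the lower bound / sharpness direction: proving that the weight $w(\al)^{-w(\al)}$ is not merely \emph{sufficient} (so that the completion embeds in $U(\fg)_{\mathscr M_1}$) but \emph{necessary} (so that nothing smaller works), which amounts to exhibiting, for each $r$, a Banach-algebra representation of $U(\fg)$ whose associated prenorm on $U(\fg)$ is comparable to $\|\cdot\|_r$ — i.e., realizing the ``extremal'' weight by an actual operator representation. For the Heisenberg algebra this can be done with Weyl-type operators on a weighted space; in general I would build such representations on the simply connected group $G$ using left translations on suitable weighted holomorphic function spaces, and here the Riemannian-geometry input (the word/length function on $G$ and its comparison with $w(\al)$) enters to certify that the representation has exactly the right growth. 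I expect this step — matching the operator-norm growth of a carefully chosen family of representations to the combinatorial weight $\al!\,w(\al)^{-w(\al)}r^{w(\al)}$ uniformly in $\al$ — to be the technically heaviest part, and it is presumably where the bulk of the paper's machinery on holomorphic functions of exponential type is deployed.
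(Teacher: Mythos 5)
Your high-level bookkeeping is right: granting that $U(\fg)_{\mathscr M_1}$ is an Arens-Michael algebra, the remaining content is that every continuous submultiplicative prenorm on $U(\fg)$ is dominated by some $\|\cdot\|_r$, i.e., that every homomorphism $\phi$ into a Banach algebra satisfies $\|\phi(e^\al)\|\le C\,\al!\,w(\al)^{-w(\al)}r^{w(\al)}$. But this bound \emph{is} the theorem, and your proposed mechanism for it --- ``reordering a length-$n$ word into PBW form produces cancellations,'' plus an induction on the nilpotency class --- does not engage with where the difficulty actually sits. The elements that need the strong decay, e.g.\ $e_m^n$ with $w_m=k$, are already in PBW form, so no reordering is available; the required estimate $\|\phi(e_m)^n\|^{1/n}=O(n^{1-k})$ has to be extracted from the fact that $e_m$ is an iterated bracket of length $k$, by controlling powers of nested commutators in a Banach algebra. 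The paper never proves this directly: it appears only \emph{a posteriori}, as the Proposition at the end of Section~\ref{PMR}, deduced \emph{from} Theorem~\ref{AMEdescrnil}. Likewise, your first half (finding submultiplicative prenorms equivalent to the $\|\cdot\|_r$) and your closing paragraph (constructing, for each $r$, an extremal representation whose prenorm is comparable to $\|\cdot\|_r$) are two descriptions of the same inequality, and neither is carried out; note that Goodman's weight-sequence condition $M_\ga\le M_\al M_\be$ for $w(\ga)\ge w(\al)+w(\be)$ yields joint continuity of multiplication on $U(\fg)_{\mathscr M_1}$, not submultiplicativity of the prenorms, so the ``arithmetic heart'' you isolate does not by itself close that half either.

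The paper sidesteps all operator-norm estimates by dualizing. It replaces $\widehat U(\fg)$ by $\widehat{\mathscr{A}}(G)$ (Proposition~\ref{csicHFG}), identifies the strong dual of the latter with $\cO_{exp}(G)$ (Proposition~\ref{AOdul}, Corollary~\ref{paUwhOe}), and then proves the one concrete fact of the whole argument: the word length function is equivalent to the homogeneous norm $\si(g)=\max_i|t_i|^{1/w_i}$ (Theorem~\ref{eelUsi}), whence $\cO_{exp}(G)=\varinjlim_r\cO_{\eta^r}(G)$ with $\eta=e^{\si}$ (Theorem~\ref{exptypdesc}). Combining this with Goodman's two-sided estimate $ce^{a\si}\le\Phi\le Ce^{A\si}$ for the growth function of $\mathscr M_1$ and with Goodman's duality $U(\fg)_{\mathscr M_1}\cong\cO_{\mathscr M_1}(G)'$, both algebras become strong duals of the same reflexive space under the same pairing extending \eqref{ufPp}, and the isomorphism --- hence both assertions of the theorem at once --- follows. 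If you want to salvage your representation-theoretic route, you must actually prove the decay bound for powers of nested commutators in an arbitrary Banach algebra; that is a genuinely different, and substantially harder, path than the one the paper takes.
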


The first step of the proof of Theorem~\ref{AMEdescrnil} is
reduction $\widehat U(\fg)$ to the Arens-Michael envelope of the
algebra $\mathscr{A}(G)$ of analytic functionals on the
corresponding simply connected complex Lie group $G$
(Proposition~\ref{csicHFG}). Further, we use  the identification
(obtained by Akbarov in \cite{Ak08}) between the strong dual
space of $\widehat{\mathscr{A}}(G)$  and the locally convex space
$\cO_{exp}(G)$ of holomorphic functions of exponential type
on~$G$.

The key technical result is Theorem~\ref{eelUsi}, which gives
estimations for growth rate of a word length function. It is not
particularly  original: an explicit formulation and a part of a
proof can be found in \cite[II.4.17]{DER}, where the statement is
given for a right invariant Riemannian distance. The reasoning is
essentially contained in the proofs of \cite[Pr.~IV.5.6 and
IV.5.7]{VSC92} but in a latent form.  Moreover, the main goal of
[ibid.] is to rate volume growth; so the reader needs some
additional work to extract an argument for distances. Very close
results are contained in \cite[Th.~4.2]{Kar94} and
\cite[Pr.~7.25]{Be96} but in variations that are not completely
satisfactory for our purposes. Besides, a careful examination
shows that estimates for Riemannian distances are based on
estimates for length functions; thus there is a direct way to
establish Theorem~\ref{eelUsi}, which passes Riemannian geometry.
So I include a complete proof in Appendix, where a connection
with Riemannian distances is also explained.

From Theorem~\ref{eelUsi} we obtain an explicit description of
$\cO_{exp}(G)$ (Theorem~\ref{exptypdesc}) and show that the
functions of exponential type forms exactly the dual space of
$U(\fg)_{\mathscr{M}_1}$, which implies our assertion.
(Theorem~\ref{exptypdesc}, which plays only supporting role in our
argument, is of independent interest itself. This result is an
essential part of the description of the space of holomorphic
functions of exponential type on an arbitrary connected complex
Lie group --- the subject that is discussed in \cite{Ar19}.)

\subsection*{Homological epimorphisms}
Let $A$ be a $\ptn$-algebra. Recall that an
\emph{$A$-$\ptn$-bimodule} is a complete Hausdorff locally convex
space endowed with a structure of a unital  $A$-bimodule s.t. both
left and right multiplications are jointly continuous. Below
$\ptens{A}$ denotes the projective tensor product of
$A$-$\ptn$-modules.

A homomorphism of $\ptn$-algebras $A\to B$ is called
a~\emph{homological epimorphism} if the induced functor between
the bounded derived categories of $\ptn$-modules is fully
faithful. This condition is equivalent to the following:  for some
(or what is the same, for each) admissible projective resolution
$0 \lar A \lar L_\bullet$ in the relative category of
$A$-$\ptn$-bimodules the
 complex
\begin{equation}\label{nqflres}
0 \lar B\ptens{A} B  \lar B\ptens{A} L_0\ptens{A} B   \lar \cdots
 \lar B\ptens{A} L_{n}\ptens{A} B  \lar\cdots
\end{equation}
is admissible \cite[Rem.~6.4]{Pir_qfree}.  We follow the
terminology from [ibid.]; the alternative terminologies: $A\to B$
is a \emph{localization} or $B$ is \emph{stably flat over $A$} are
used in \cite{Pir_stbflat}. Definitions of homological notions for
$\ptn$-algebras can be found in \cite{X1,X2,X_HOA}. We do not need
details here because the only necessary fact on homological
epimorphisms is Theorem~\ref{PirkThUM} below.

Taylor \cite{T2} proved that the Arens-Michael envelope is a
homological epimorphism for the polynomial algebra in $n$
generators and for the free algebra in $n$ generators.  The
natural next step is to consider the universal enveloping algebra
$U(\fg)$ of a finite-dimensional complex Lie algebra $\fg$. The
first and a bit disappointing result in this direction, also due
to Taylor \cite{T2}, asserts that if $\fg$ is semisimple, then the
Arens-Michael envelope $\io_U\!:U(\fg)\to \widehat U(\fg)$ fails
to be a homological epimorphism, in contrast to the abelian case.
Many years later the results for solvable $\fg$ began to appear.
Dosiev \cite[Th.~10]{Do03} proved that $\io_U$ is a homological
epimorphism provided $\fg$ is metabelian (i.e., $[\fg, [\fg, \fg]]
= 0$). Later, Pirkovkii generalized this result to positively
graded Lie algebras \cite[Th.~6.19]{Pir_stbflat} and Dosiev
generalized it to nilpotent Lie algebras satisfying a condition of
''normal growth'' \cite{Do09}. A natural conjecture became that
the same is true for each nilpotent Lie algebra $\fg$. On the
other hand, it was shown in \cite{Pi4} that  $\io_U$   can be a
homological epimorphism only when $\fg$ is solvable.

Another approach was introduced Pirkovkii in \cite{Pir_qfree}. An
Ore extension iteration gives sometimes a direct construction of
$\widehat U(\fg)$  and a method to prove that $\io_U$ is a
homological epimorphism. This method requires some technical work;
nonetheless, it gives the only known up to date example of a
solvable non-nilpotent~$\fg$ s.t. $\io_U$ is a homological
epimorphism, namely, the two-dimensional solvable non-abelian Lie
algebra.

Now we formulate our second main result.
\begin{thm}\label{thmmail}
Let $\fg$ be a finite-dimensional nilpotent complex Lie algebra.
Then the Arens-Michael envelope $\io_U\!:U(\fg)\to \widehat
U(\fg)$  is a   homological epimorphism.
\end{thm}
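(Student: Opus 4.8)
The plan is to combine the explicit description of $\widehat U(\fg)$ from Theorem~\ref{AMEdescrnil} with the homological criterion recorded in Theorem~\ref{PirkThUM}. Fix the Chevalley--Eilenberg resolution of $U(\fg)$ by free $U(\fg)$-bimodules, $L_k=U(\fg)\otimes\Lambda^k\fg\otimes U(\fg)$ for $0\le k\le m:=\dim\fg$; this resolution is admissible because $U(\fg)$ carries the strongest locally convex topology and each $\Lambda^k\fg$ is finite dimensional, so by the criterion of Theorem~\ref{PirkThUM} it suffices to prove that the associated complex~\eqref{nqflres} is admissible. Since $\Lambda^k\fg$ is finite dimensional one has $\widehat U(\fg)\ptens{U(\fg)}L_k\ptens{U(\fg)}\widehat U(\fg)\cong(\widehat U(\fg)\ptn\widehat U(\fg))\otimes\Lambda^k\fg$, and, the Arens--Michael envelope commuting with $\ptn$ here (which can be read off Theorem~\ref{AMEdescrnil}), we identify $\widehat U(\fg)\ptn\widehat U(\fg)$ with $\widehat U(\fg\oplus\fg)=\widehat{\mathscr A}(G\times G)$. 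After these reductions \eqref{nqflres} is the analytic Chevalley--Eilenberg complex of $\fg$ with coefficients in the $\fg$-bimodule $\widehat U(\fg\oplus\fg)$, i.e.\ an explicit finite Koszul-type complex whose terms are the concrete weighted algebras of Theorem~\ref{AMEdescrnil}; equivalently, it is the Koszul complex resolving the diagonal inside $G\times G$ in the category of $\widehat{\mathscr A}$-modules. Everything comes down to verifying its admissibility.

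I would prove this by induction on $m=\dim\fg$. The case $\fg=0$ is trivial, and for $\dim\fg=1$ one has $\widehat U(\fg)=\cO(\CC)$ and the complex reduces to $0\lra\cO(\CC^2)\xrightarrow{u-v}\cO(\CC^2)\lra\cO(\CC)\lra 0$, whose admissibility is the classical fact that a holomorphic function on $\CC^2$ vanishing on the diagonal is divisible by $u-v$ with holomorphic quotient depending continuously on the function (a one-variable Hadamard-type division with estimates). For the inductive step one uses that a nonzero nilpotent Lie algebra $\fg$ contains a one-dimensional central ideal $\mathfrak{z}=\CC z$; put $\mathfrak{q}=\fg/\mathfrak{z}$, a nilpotent algebra of dimension $m-1$, and note that the weight $\mathscr{M}_1$ of $\fg$ restricts to the weight $\mathscr{M}_1$ of $\mathfrak{q}$. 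From the explicit form in Theorem~\ref{AMEdescrnil} one then sees that $\widehat U(\fg)$ is an analytic Ore-type extension of $\widehat U(\mathfrak{q})$ in the central variable $z$ (the central extension is generally non-split, but the cocycle twist is governed precisely by $\mathscr{M}_1$), and that, up to this twist and up to admissible homotopies, the analytic Chevalley--Eilenberg complex of $\fg$ is the mapping cone of multiplication by $z$ on the corresponding complex for $\mathfrak{q}$; its admissibility then follows from the inductive hypothesis and the one-variable division above. A variant that avoids the induction is to filter $U(\fg)$ and $\widehat U(\fg)$ by the weight $w$ of~\eqref{widef}: since $\fg$ is nilpotent the associated graded algebra is a weighted entire-function completion of a polynomial algebra, the associated graded complex is then a Koszul complex resolving a coordinate subspace inside an algebra of entire functions of finite order---admissible by a Cartan-type division with uniform estimates---and admissibility lifts from the graded to the filtered level.

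The main obstacle is not exactness of these complexes as complexes of vector spaces, which is ordinary Koszul exactness (or holomorphic division), but the \emph{topological} part: producing a contracting homotopy that is continuous simultaneously for the entire family of seminorms $\|\cdot\|_r$, $r>0$, of~\eqref{UfgMde}. This is exactly where the precise shape of the weight $\mathscr{M}_1$---the factor $M_\alpha=w(\alpha)^{-w(\alpha)}$, with its link to order and type of entire functions noted in the Introduction---is indispensable: it is at once \emph{large} enough that $U(\fg)_{\mathscr{M}_1}$ is the whole Arens--Michael envelope (Theorem~\ref{AMEdescrnil}) and \emph{small} enough that the division estimates close up uniformly in $r$. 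One must also check that the non-split central extension, i.e.\ the twisting terms of the analytic Ore extension, does not destroy the mapping-cone identification at the level of admissibility; this is where nilpotency of $\operatorname{ad}z$ and of the structure constants enters essentially. The picture is consistent with the fact that the obstruction defeating the semisimple case of Taylor is precisely the failure of such acyclicity, so that nilpotency is used exactly to secure these uniform homotopy estimates.
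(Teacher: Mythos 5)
Your proposal has a genuine gap, and it stems from a misreading of Theorem~\ref{PirkThUM}. That theorem is not a ``homological criterion'' to be verified: it is Pirkovskii's complete result that for \emph{every} entire $\mathscr F$-weight sequence $\mathscr M$ the natural map $U(\fg)\to U(\fg)_{\mathscr M}$ is already a homological epimorphism. The paper's proof is therefore two lines: $\mathscr M_1$ is entire (a fact cited from Goodman, \cite[Sect.~2, Exm.~1]{Go79}), so Theorem~\ref{PirkThUM} applies to $U(\fg)\to U(\fg)_{\mathscr M_1}$, and Theorem~\ref{AMEdescrnil} identifies this map with $\io_U$. The one hypothesis that actually needs checking --- that $\mathscr M_1$ satisfies the entirety conditions~\eqref{entire} --- is the one thing your proposal never mentions.

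Having bypassed this, you instead set out to verify the admissibility of the complex~\eqref{nqflres} for the Chevalley--Eilenberg resolution directly, which amounts to re-proving Pirkovskii's theorem from scratch. As a program this is not unreasonable (it is close in spirit to what Pirkovskii and Dosiev actually do), but your writeup does not carry out any of the decisive steps. You yourself identify the ``main obstacle'' --- a contracting homotopy continuous simultaneously for the whole family of norms $\|\cdot\|_r$, $r>0$ --- and then do not construct it; the claim that $M_\alpha=w(\alpha)^{-w(\alpha)}$ is ``small enough that the division estimates close up uniformly in $r$'' is an assertion of exactly what has to be proved. Likewise, the identification of the analytic Chevalley--Eilenberg complex of $\fg$ with a mapping cone over $\mathfrak q=\fg/\mathfrak z$ ``up to the cocycle twist'' is stated without justification, and the claim that $\mathscr M_1$ for $\fg$ ``restricts to'' $\mathscr M_1$ for $\mathfrak q$ is false in general, since passing to the quotient by a central ideal changes the lower central series and hence the weights $w_i$. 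In short: the route could in principle be completed, but as written the analytic core is missing, and the completed version already exists in the literature as the theorem you cite at the outset.
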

The  proof is based on Theorem~\ref{AMEdescrnil} and the
following Pirkovskii's theorem. (The definition of an entire
${\mathscr F}$-weight sequence see below in~\eqref{entire}.)
\begin{thm} \label{PirkThUM}
\cite[Th.~7.3]{Pir_stbflat} Let $\fg$ be a finite dimensional
nilpotent complex Lie algebra,  and let ${\mathscr M}$ be an
entire ${\mathscr F}$-weight sequence for some positive
filtration ${\mathscr F}$. Then the natural homomorphism
$U(\fg)\to U(\fg)_{\mathscr M}$ is a   homological epimorphism.
\end{thm}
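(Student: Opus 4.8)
The plan is to verify the resolution criterion~\eqref{nqflres} directly, taking for $L_\bullet$ the standard Chevalley--Eilenberg--Koszul bimodule resolution of $A:=U(\fg)$: $L_p=U(\fg)\otimes\Lambda^p\fg\otimes U(\fg)$ for $0\le p\le m:=\dim\fg$, with the usual differential (the ``left minus right action'' terms together with the bracket terms). This resolution is finite, its terms are free $A$-$\ptn$-bimodules and hence relatively projective, and it is admissible (it splits over $\CC$: $U(\fg)$ carries the strongest locally convex topology, so every linear map is continuous, and a bounded exact complex of free $A$-modules augmented over the free $A$-module $A$ itself is contractible). Put $B:=U(\fg)_{\mathscr M}$. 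Using nuclearity of these Fr\'echet spaces together with $B\ptens A A\cong B$, applying $B\ptens A(-)\ptens A B$ turns~\eqref{nqflres} into the ``twisted Koszul complex'' $B\otimes\Lambda^\bullet\fg\otimes B$, with $\Lambda^\bullet\fg$ sitting inertly in the middle. The task then splits in two: first, that the natural map $B\ptens A B\to B$ is an isomorphism --- an easy consequence of the density of $U(\fg)$ in $B$; and second, that $B\otimes\Lambda^\bullet\fg\otimes B$ is exact in positive degrees.

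Both points I would reach by a deformation to the commutative situation. Filter $\fg$ by $\mathscr F$ and transport this to an exhaustive increasing filtration of $U(\fg)$ and of $B$. The associated graded algebra of $U(\fg)$ is $U(\operatorname{gr}_{\mathscr F}\fg)$ for the positively graded nilpotent Lie algebra $\operatorname{gr}_{\mathscr F}\fg$; and --- this is precisely where the hypothesis that $\mathscr M$ is an \emph{entire} $\mathscr F$-weight sequence in the sense of~\eqref{entire} is used --- the filtration on $B$ is Hausdorff and complete with finite-dimensional graded components, so that $\operatorname{gr}B\cong U(\operatorname{gr}_{\mathscr F}\fg)_{\mathscr M}$ for the induced, again entire, weight sequence. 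The twisted Koszul differential is filtered, and on passing to associated graded one obtains the Koszul complex of the positively graded algebra $\operatorname{gr}B$, which is exact in positive degrees by the positively graded case \cite[Th.~6.19]{Pir_stbflat}; at bottom that rests on Taylor's computation that $\CC[x_1,\ldots,x_m]\to\cO(\CC^m)$ is a homological epimorphism --- equivalently, that the sequence $z_1-w_1,\ldots,z_m-w_m$ cutting out the diagonal is regular in the appropriate entire completion of $\CC[x]\ptn\CC[x]$. A standard comparison lemma for complete exhaustively filtered complexes then transfers exactness from $\operatorname{gr}\bigl(B\otimes\Lambda^\bullet\fg\otimes B\bigr)$ back to $B\otimes\Lambda^\bullet\fg\otimes B$, as required.

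The first step above is routine but must be carried out with care: one has to check that $B\ptens A L_p\ptens A B\cong B\otimes\Lambda^p\fg\otimes B$ and that the induced differential really is the twisted Koszul one, using the ``PBW-completion'' description of $B$ as a topological left and right $A$-module and nuclearity. The genuine obstacle is the deformation step, and within it the point that $\operatorname{gr}$ commutes with the completed tensor products $\ptens A$ --- equivalently, that the filtered complexes $B\ptens A L_\bullet\ptens A B$ are complete and Hausdorff. This forces one to control completeness of all the relevant filtrations at once, and it is exactly here that entirety of $\mathscr M$ is indispensable: for a non-entire weight sequence $\operatorname{gr}B$ is too small, the commutative base case fails, and the whole scheme collapses. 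I expect this compatibility of $\operatorname{gr}$ with $\ptens A$ to be the main technical hurdle, as it is the place where the homological formalism has to be reconciled with the functional-analytic structure of the completion $B$.
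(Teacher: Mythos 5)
First, be aware that the paper offers no proof of this statement: it is quoted verbatim from Pirkovskii \cite[Th.~7.3]{Pir_stbflat} and used as a black box (together with Theorem~\ref{AMEdescrnil}) to deduce Theorem~\ref{thmmail}. Your proposal can therefore only be measured against Pirkovskii's original argument, whose general shape --- a Koszul-type bimodule resolution of $U(\fg)$ followed by a reduction to the graded/commutative situation --- your plan does share.

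That said, the central step of your plan fails as written. The weight filtration determined by $\mathscr F$ is an \emph{increasing} filtration of $U(\fg)$ with finite-dimensional levels $F_nU(\fg)=\spn\{e^\alpha: w(\alpha)\le n\}$; on the completion $B=U(\fg)_{\mathscr M}$, whose elements are genuinely infinite series, this filtration is no longer exhaustive, since $\bigcup_n F_nB=U(\fg)\subsetneq B$. Consequently the announced isomorphism $\operatorname{gr}B\cong U(\operatorname{gr}_{\mathscr F}\fg)_{\mathscr M}$ is impossible: the associated graded object of an increasing filtration with finite-dimensional steps is a direct sum of finite-dimensional spaces and can recover at most $U(\operatorname{gr}_{\mathscr F}\fg)$ itself, never its completion. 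For the same reason the ``standard comparison lemma for complete exhaustively filtered complexes'' has no filtration to which it applies on $B\ptens{A}L_\bullet\ptens{A}B$; making the graded reduction compatible with the completion is not a technical hurdle you can defer --- it is the whole theorem. A workable version replaces $\operatorname{gr}$ by the PBW/symmetrization linear isomorphism $S(\fg)_{\mathscr M}\to U(\fg)_{\mathscr M}$, views the Koszul differential of $U$ as a perturbation of the commutative one by terms of lower weight, and inverts the perturbation by a convergent series; the convergence estimates are exactly where entirety of $\mathscr M$ in the sense of~\eqref{entire} enters, and that analytic control is absent from your sketch. Two smaller gaps: the base case you invoke, \cite[Th.~6.19]{Pir_stbflat}, concerns the Arens-Michael envelope of a positively graded Lie algebra, not $U(\operatorname{gr}_{\mathscr F}\fg)_{\mathscr M}$ for an arbitrary entire $\mathscr M$, which is what your reduction would need; and admissibility of \eqref{nqflres} requires a continuous contracting homotopy, which does not follow from exactness of a complex of nuclear Fr\'echet spaces alone.
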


The present paper is organized as follows. In section~\ref{PR},
some preliminary results on the algebra of analytic functionals,
submultiplicative weights, and holomorphic functions of
exponential type are collected.  In section~\ref{RM}, a result on
growth of word length functions (Theorem~\ref{eelUsi}) is
formulated  and applied to characterize  holomorphic functions of
exponential type on a simply connected nilpotent  Lie group
(Theorem~\ref{exptypdesc}). The proofs of
Theorems~\ref{AMEdescrnil},~\ref{thmmail}  and some application
are contained in  section~\ref{PMR}. Appendix includes the proof
of Theorem~\ref{eelUsi} and the explanation how this assertion is
connected with growth rate of invariant Riemannian distances.

\subsection*{Acknowledgements} The author thanks S.~Akbarov for
useful comments.

\section{Preliminary results}\label{PR}
\subsection*{Reduction to the algebra of analytic functionals}

Let $G$ be a complex Lie group with Lie algebra $\fg$.  Consider
the Fr\'{e}chet algebra $\cO(G)$ of holomorphic functions on $G$
and its strong dual space $\cO(G)'$ endowed with the convolution
multiplication.  This $\ptn$-algebra is denoted by
${\mathscr{A}}(G)$ and is called the \emph{algebra of analytic
functionals} on $G$ (for a general complex Lie group,
${\mathscr{A}}(G)$ is introduced by G.~Litvinov in \cite{Lit}).

Consider elements of $U(\fg)$ as left-invariant differential
operators on $\cO(G)$ and define the homomorphism $\tau$ by
\begin{equation}\label{taudef}
\tau\!:U(\fg) \to \mathscr{A}(G)\!:\langle \tau(X), f\rangle\!:= \langle
\de_e, Xf\rangle \qquad (X\in U(\fg),\, f \in  \cO(G))\,,
\end{equation}
where $\de_e$ is denoted the delta-function at $e$.

Recall that the Arens-Michael envelope is a functor from the
category of $\ptn$-algebras to the category of  Arens-Michael
algebras. For any  $\ptn$-algebra homomorphism $\te$ we denote
the corresponding homomorphism between the Arens-Michael
envelopes by $\widehat \te$.

\begin{pr}\label{csicHFG}
Let $G$ be a simply connected complex Lie group with Lie
algebra~$\fg$. Then $\widehat\tau\!:\widehat{U}(\fg) \to
\widehat{\mathscr{A}}(G)$ is an Arens-Michael algebra
isomorphism.
\end{pr}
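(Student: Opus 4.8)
The plan is to exhibit $\widehat\tau$ as the Arens-Michael envelope of the map $\tau\colon U(\fg)\to\mathscr A(G)$ by showing that $\tau$ itself already induces an isomorphism after completion, and for this it suffices to produce, on the algebraic level, a left inverse to $\tau$ modulo completions and to identify the continuous submultiplicative prenorms on both sides. Concretely, I would first recall that for a simply connected complex Lie group $G$ the algebra $\mathscr A(G)=\cO(G)'$ contains $U(\fg)$ as the subalgebra of distributions supported at $e$, via \eqref{taudef}, and that $U(\fg)$ is dense in $\mathscr A(G)$ for the strong dual topology — this is the standard fact that analytic functionals are approximated by differential operators at a point (Litvinov \cite{Lit}), using that $G$ is Stein and simply connected so that $\cO(G)$ separates jets at $e$. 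Density of a continuous homomorphism with dense range is preserved by the Arens-Michael functor, so $\widehat\tau$ has dense range; the real content is injectivity together with the claim that every continuous submultiplicative prenorm on $U(\fg)$ extends continuously to $\mathscr A(G)$.

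The key step is therefore a comparison of prenorm systems. On one hand, by the universal property \eqref{AMen}, a continuous submultiplicative prenorm on $\mathscr A(G)$ restricts along $\tau$ to one on $U(\fg)$, giving a canonical continuous homomorphism $\widehat{U}(\fg)\to\widehat{\mathscr A}(G)$ which is exactly $\widehat\tau$ after the identification, and it is automatically surjective-with-dense-image by the previous paragraph. Conversely I must show each continuous submultiplicative prenorm $\|\cdot\|$ on $U(\fg)$ is dominated by (the restriction of) one coming from $\mathscr A(G)$. Here I would use that a continuous submultiplicative prenorm on $U(\fg)$, by a theorem of Akbarov/Goodman type already implicit in the cited literature, is controlled by a submultiplicative weight on $G$ — equivalently, it factors through a Banach algebra $U(\fg)\to B$, and since $B$ is an Arens-Michael (indeed Banach) algebra and $G$ is simply connected, the one-parameter subgroups $t\mapsto\exp(tX)$, $X\in\fg$, integrate to an analytic (entire, exponential-type-bounded) homomorphism $G\to B^\times$; this yields a continuous homomorphism $\mathscr A(G)\to B$ extending $U(\fg)\to B$, whence $\|\cdot\|$ is the restriction of a continuous submultiplicative prenorm on $\mathscr A(G)$. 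Integrability of the $\exp$ map into a Banach algebra is where simple connectivity of $G$ is essential and is the crux; it is the step where I expect to lean on the growth estimates for word-length functions (the machinery leading to Theorem~\ref{eelUsi}) to guarantee that the resulting homomorphism $G\to B^\times$ is bounded on compacta and hence defines a continuous functional-calculus map on $\cO(G)'$.

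Putting these together: $\widehat\tau$ is a continuous homomorphism with dense range, and the two prenorm systems — those on $\widehat{U}(\fg)$ and those pulled back from $\widehat{\mathscr A}(G)$ — coincide, so $\widehat\tau$ is a topological embedding with dense range between complete spaces, hence an isomorphism of Arens-Michael algebras. The main obstacle, as indicated, is the integration step producing the Banach-algebra-valued homomorphism on $G$ from a representation of $U(\fg)$; once that is in hand the bookkeeping with universal properties is routine. I would organize the write-up as: (i) density of $\tau$; (ii) the functor preserves dense range, so $\widehat\tau$ has dense range; (iii) each continuous submultiplicative prenorm on $U(\fg)$ extends to $\mathscr A(G)$ via the integration argument; (iv) conclude that $\widehat\tau$ is isometric for the respective prenorm families and therefore an isomorphism.
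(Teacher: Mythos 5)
Your proposal follows essentially the same route as the paper: the two inputs are the density of the range of $\tau$ and the fact that, by simple connectivity, every continuous homomorphism from $U(\fg)$ into a Banach (hence any Arens--Michael) algebra integrates to one from $\mathscr{A}(G)$ — the paper imports the latter as \cite[Pr.~9.1]{Pir_stbflat} and then concludes via a short categorical lemma (Lemma~\ref{BwhA}) rather than your prenorm-by-prenorm comparison. One correction: the integration step does not require the word-length growth estimates behind Theorem~\ref{eelUsi} (the proposition concerns arbitrary simply connected complex Lie groups, not just nilpotent ones); those estimates enter only later, in the explicit description of $\cO_{exp}(G)$.
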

\begin{rem}
The similar result is valid for real Lie groups.  If $G$ is a
simply connected real Lie group and  $\fg_\CC$ denotes the
complexification of its Lie algebra, then   $U(\fg_\CC)$ and the
algebra $\mathcal{E}'(G)$ of compactly supported distribution
have  the  same Arens-Michael envelope  \cite[p.~250]{T2}.
\end{rem}
To prove Proposition~\ref{csicHFG} we need the following lemma.
\begin{lm}\label{BwhA}
Let $\te\!:A\to B$ be an epimorphism of $\ptn$-algebras. Suppose
that  there exists a continuous homomorphism $j\!:B\to
\widehat{A}$ s.t. $\io_{A}=j\te$.   Then
$\widehat{\te}\!:\widehat{A}\to \widehat{B}$ is an Arens-Michael
algebra isomorphism.
\end{lm}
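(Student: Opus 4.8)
The plan is to prove Lemma~\ref{BwhA} by exploiting the universal property of the Arens-Michael envelope twice, once in each direction, and then invoking uniqueness. First I would apply the universal property of $\widehat A$ to the composite $A\xrightarrow{\te}B\xrightarrow{\io_B}\widehat B$: since $\widehat B$ is an Arens-Michael algebra and $\io_B\te$ is a continuous homomorphism, there is a unique continuous homomorphism $\widehat\te\colon\widehat A\to\widehat B$ with $\widehat\te\,\io_A=\io_B\te$ (this is just the functoriality already recalled before the statement). Conversely, I would apply the universal property of $\widehat B$ to the composite $B\xrightarrow{j}\widehat A\xrightarrow{\io_{\widehat A}}\widehat A$; but $\widehat A$ is already Arens-Michael, so $\io_{\widehat A}$ is an isomorphism and we simply get a continuous homomorphism $\widehat B\to\widehat A$, call it $\psi$, with $\psi\,\io_B=j$. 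I claim $\psi$ is the two-sided inverse of $\widehat\te$.

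The key step is to verify $\psi\circ\widehat\te=\id_{\widehat A}$ and $\widehat\te\circ\psi=\id_{\widehat B}$, and for this I would use that $\te$ is an epimorphism of $\ptn$-algebras together with the density of the images of $\io_A$ and $\io_B$. For the first identity: $\psi\,\widehat\te\,\io_A=\psi\,\io_B\te=j\te=\io_A$ by hypothesis, so $\psi\,\widehat\te$ and $\id_{\widehat A}$ are two continuous homomorphisms $\widehat A\to\widehat A$ agreeing after precomposition with $\io_A$; by the uniqueness clause in the universal property of $\widehat A$ (applied to the morphism $\io_A\colon A\to\widehat A$ into the Arens-Michael algebra $\widehat A$), they coincide. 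For the second identity: $\widehat\te\,\psi\,\io_B=\widehat\te\,j\te$; now $\widehat\te\,j$ and... this needs a little care, so instead I would compute $\widehat\te\,\psi\,\io_B\,\te$. We have $\psi\,\io_B\,\te=j\,\te=\io_A$, hence $\widehat\te\,\psi\,\io_B\,\te=\widehat\te\,\io_A=\io_B\,\te$. Since $\te$ is an epimorphism, $\widehat\te\,\psi\,\io_B=\io_B$, and then the uniqueness clause in the universal property of $\widehat B$ forces $\widehat\te\,\psi=\id_{\widehat B}$.

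The main obstacle is the bookkeeping around the epimorphism hypothesis: one must be careful that ``epimorphism of $\ptn$-algebras'' means right-cancellable among continuous homomorphisms (not merely dense range, though in practice that is how it is used), and that the uniqueness statements are applied to the correct source object and target Arens-Michael algebra. A clean way to package this is to observe that a continuous homomorphism $A\to C$ into an Arens-Michael algebra $C$ factors uniquely through $\io_A$, and that two continuous homomorphisms $\widehat A\to C$ with the same restriction along $\io_A$ are equal; the epimorphism property of $\te$ is then used precisely once, to pass from an equality after $\io_B\,\te$ to an equality after $\io_B$. Once both composites are identified with the respective identities, $\widehat\te$ is a continuous isomorphism with continuous inverse, hence an isomorphism of Arens-Michael algebras, as claimed.
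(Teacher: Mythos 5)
Your argument is correct and follows essentially the same route as the paper's: both construct the candidate inverse (your $\psi$, the paper's $\al$) from the universal property applied to $j$, verify $\psi\widehat{\te}=\id$ by precomposing with $\io_A$, and deduce the other identity from the epimorphism hypothesis on $\te$. The only cosmetic difference is in the last step, where the paper first observes that $\widehat{\te}$ is itself an epimorphism and uses the retract trick $\al\widehat{\te}=1\Rightarrow\widehat{\te}\al\widehat{\te}=\widehat{\te}\Rightarrow\widehat{\te}\al=1$, whereas you cancel $\te$ and then $\io_B$ directly; both are valid.
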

\begin{proof}
The homomorphisms $\io_{A}$ and $\io_{B}$ have dense ranges; so
they are epimorphisms. Since, by assumption, $\te$ is an
epimorphism, it follows from $\widehat{\te}\io_{A}=\io_{B} \te$
that  $\widehat{\te}$ also is  an epimorphism.

By the universal the property of the Arens-Michael envelope, there
is a continuous homomorphism $\al\!:\widehat{B}\to \widehat{A}$
s.t. $j=\al\io_{B}$. Therefore,
$$
\al \widehat{\te}\io_{A}=\al\io_{B} \te=j\te=\io_{A}\,.
$$
Since $\io_{A}$  is an epimorphism, we have $\al
\widehat{\te}=1$.   Therefore, $\widehat{\te}\al
\widehat{\te}=\widehat{\te}$. But $\widehat{\te}$  is an
epimorphism so  $\widehat{\te}\al =1$. Thus $\widehat{\te}$ is
invertible.
\end{proof}

\begin{proof}[Proof of Proposition~\ref{csicHFG}]
Since $G$ is simply connected, it follows from
\cite[Pr.~9.1]{Pir_stbflat} that there exists a unique continuous
homomorphism $j\!:{\mathscr{A}}(G)\to \widehat{U}(\fg)$ s.t.
$\io_{U(\fg)}=j\tau$. On the other hand, $\tau$ has dense range
provided  $G$ is connected (because the dual map is injective;
see, e.g., discussion after formula~(42) in [ibid.]). So we can
apply Lemma~\ref{BwhA}.
\end{proof}

\subsection*{Submultiplicative weights and length functions}

\begin{df}
(A) A \emph{submultiplicative weight} on a locally compact group $G$ is a
non-negative locally bounded function $\om\!: G \to \R$ s.t.
$$
\om(gh)\le \om(g)\om(h)\qquad (g, h \in G)\,.
$$

(B)  A \emph{length function} on a locally compact group $G$ is a locally
bounded function $\ell\!:G\to \R$ s.t.
$$
 \ell(gh)\le \ell(g)+\ell(h)\qquad (g, h \in G)\,.
$$
\end{df}

It is not hard to check that a strictly positive submultiplicative
weight maps $G$ to $[1,+\infty)$, and a length function  maps $G$
to $[0,+\infty)$.

\begin{rem}
(A) We accept the terminology from \cite{Wi13}. As a rule, $\ell$
(or $\om$) is assumed to be symmetric, i.e., $\ell(e) = 0$ and
$\ell(g^{-1})=\ell(g)$ (or $\om(e) = 1$ and $\om(g^{-1})=\om(g)$);
see Appendix. The short term 'weight' (see, e.g.,
\cite{Sch93,Da00}) is usual but it has too many different senses.
Akbarov in \cite{Ak08} employs 'semicharacter' for
'submultiplicative weight'. L.~Schweitzer in \cite{Sch93} prefers
'gauge' for 'length function'.  The words  'seminorm' for
'submultiplicative weight' \cite[Sect.~4.2.2]{War72} and 'modulus'
for 'length function' \cite{DER} can be also  used in the Lie
group context.

(B) We assume that $\om$ or $\ell$ is locally bounded because we
follow \cite{Ak08} principally. The more common admissions that
$\om$ or $\ell$ is measurable (w.r.t. the Haar measure) or Borel
are stronger. Indeed, in these cases, submultiplicativity or
subadditivity   implies that $\om$ or $\ell$, resp., is bounded
on compact sets and, hence, is locally bounded (see
\cite[Pr.~2.1]{Dz86} and \cite[Th.~1.2.11]{Sch93}).

(C)  The  map $\ell\mapsto (\om(g)\!:=e^{\ell(g)})$ is a
bijection between the set of length functions and the set of
submultiplicative weights.
\end{rem}

We use the following notation. For a complex manifold $M$ and   a
locally bounded function $\up\!:M\to [1,+\infty)$ denote by
$V_\up$  the closed absolutely convex hull of
$$
\{\up(x)^{-1}\de_x:\,x\in M\}
$$
in $\mathscr{A}(M)\!:=\cO(M)'$. It is noted in
\cite[Sect.~3.4.3]{Ak08} that $V_\up$ is a neighbourhood of~$0$ in
$\mathscr{A}(M)$; so $V_\up$ is an absorbent set. Therefore its
Minkowski functional is well defined on $\mathscr{A}(M)$; we
denote it by $\|\cdot\|_{\up}$. Let  ${\mathscr{A}}_\up(M)$ be the
completion of ${\mathscr{A}}(M)$ w.r.t. $\|\cdot\|_{\up}$. Also,
denote by ${\mathscr{A}}_{\up^\infty}(M)$ the completion of
${\mathscr{A}}(M)$ w.r.t. the sequence of prenorms
$(\|\cdot\|_{\up^n};\,n\in\N)$, where $\up^n(x)\!:=\up(x)^n$.

For any submultiplicative weight $\om$ on a complex Lie group $G$
the prenorm~$\|\cdot\|_{\om}$ is  submultiplicative and continuous
on $\mathscr{A}(G)$ [ibid., Lem.~5.1(a)]. (Note that main results
in [ibid.] is formulated for  Stein groups but their proofs work
for all complex Lie groups.) Thus ${\mathscr{A}}_\om(G)$ is a
unital Banach algebra, and the natural map $\mathscr{A}(G)\to
{\mathscr{A}}_\om(G)$ is a continuous homomorphism. Obviously, the
maximum of two submultiplicative weights is a submultiplicative
weight, so we have a directed projective system of unital Banach
algebras
$$
({\mathscr{A}}_\om(G):\,\text{$\om$ is a submultiplicative weight on
$G$})
$$ with natural connecting homomorphisms. Note that this
system is not empty because the trivial character $g\mapsto 1$ is
a submultiplicative weight. The following result is a
reformulation  of  [ibid., Th.~5.2(a)].

\begin{thm}\label{AMesw}
If $G$ is a complex Lie group, then the continuous homomorphism
$$\mathscr{A}(G)\to \varprojlim_\om  {\mathscr{A}}_\om(G)$$ is an
Arens-Michael envelope.
\end{thm}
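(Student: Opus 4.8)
\emph{Proof proposal.} The assertion has two parts: that $\varprojlim_\om\mathscr A_\om(G)$ is an Arens-Michael algebra, and that the canonical homomorphism into it is an Arens-Michael envelope. The first is immediate, since a projective limit of unital Banach algebras is a complete Hausdorff $\ptn$-algebra whose topology is defined by the submultiplicative prenorms pulled back along the projections, and is therefore an Arens-Michael algebra. For the second I would exploit the description of $\widehat{\mathscr A}(G)$, recalled above, as the completion of $\mathscr A(G)$ with respect to the family of \emph{all} continuous submultiplicative prenorms: the plan is to prove that the subfamily $\{\|\cdot\|_\om : \om \text{ a submultiplicative weight on } G\}$ is cofinal in it. Granting this, both families are directed --- the pointwise maximum of two submultiplicative weights is again one, and $\om\le\om'$ forces $V_{\om'}\subseteq V_\om$ and hence $\|\cdot\|_\om\le\|\cdot\|_{\om'}$, so the subfamily is directed, while the full family is directed via $\max$ --- so the two families define the same locally convex topology on $\mathscr A(G)$ and thus have the same completion; and the completion with respect to the directed family $(\|\cdot\|_\om)$ is exactly $\varprojlim_\om\mathscr A_\om(G)$ with its natural connecting maps, because $\mathscr A_\om(G)$ is by definition the Banach completion of $\mathscr A(G)$ for $\|\cdot\|_\om$.

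The easy half of cofinality is that each $\|\cdot\|_\om$ is itself a continuous submultiplicative prenorm, which is \cite[Lem.~5.1(a)]{Ak08} as quoted above. For the essential half, let $p$ be an arbitrary continuous submultiplicative prenorm on $\mathscr A(G)$ and set $\om_p(g):=\max\{p(\de_g),1\}$. First I would check that $\om_p$ is a submultiplicative weight: the map $g\mapsto p(\de_g)$ is submultiplicative because $\de_g\de_h=\de_{gh}$ and $p$ is submultiplicative, and taking the maximum with $1$ preserves this; and $\om_p$ is locally bounded because $p$ is continuous and $g\mapsto\de_g$ is continuous from $G$ into $\mathscr A(G)$. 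Then, since $p(\om_p(g)^{-1}\de_g)=\om_p(g)^{-1}p(\de_g)\le 1$ for every $g$, the set $\{x:p(x)\le 1\}$ --- which is closed and absolutely convex, $p$ being a continuous prenorm --- contains every generator $\om_p(g)^{-1}\de_g$ of $V_{\om_p}$, hence contains $V_{\om_p}$, and comparing Minkowski functionals yields $p\le\|\cdot\|_{\om_p}$. This is the cofinality we wanted.

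I do not expect a genuine obstacle, the statement being a reformulation of \cite[Th.~5.2(a)]{Ak08}; the steps that repay care are the dictionary between the projective-limit and the prenorm-completion pictures of the envelope --- in particular that the connecting maps $\mathscr A_{\om'}(G)\to\mathscr A_\om(G)$ induced by $V_{\om'}\subseteq V_\om$ for $\om\le\om'$ are the ``natural'' ones meant in the statement --- together with the standard continuity of $g\mapsto\de_g$ into $\mathscr A(G)$. If one prefers to argue straight from the universal property~\eqref{AMen}, the same recipe works: to a continuous homomorphism $\phi\colon\mathscr A(G)\to B$ into a Banach algebra attach $\om(g):=\max\{\|\phi(\de_g)\|_B,1\}$; the computation above gives $\|\phi(a)\|_B\le\|a\|_\om$, so $\phi$ factors through $\mathscr A_\om(G)$ and hence through $\varprojlim_\om\mathscr A_\om(G)$, while uniqueness of the factorization follows from density of the image of $\mathscr A(G)$ in the projective limit, which holds because that limit is precisely the completion of $\mathscr A(G)$ for the family $(\|\cdot\|_\om)$.
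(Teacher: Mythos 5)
Your proposal is correct. Note, however, that the paper does not prove this statement at all: it presents Theorem~\ref{AMesw} as a reformulation of \cite[Th.~5.2(a)]{Ak08} and simply cites Akbarov, so there is no in-paper argument to compare against. What you supply is a self-contained proof, and it is the natural one: the identification of $\widehat{\mathscr{A}}(G)$ with the completion under \emph{all} continuous submultiplicative prenorms, plus cofinality of the subfamily $(\|\cdot\|_\om)$ established by attaching to an arbitrary continuous submultiplicative prenorm $p$ the weight $\om_p(g)=\max\{p(\de_g),1\}$ and comparing Minkowski functionals of $V_{\om_p}$ and of the closed unit ball of $p$. All the steps check out: $\de_g\de_h=\de_{gh}$ gives submultiplicativity of $g\mapsto p(\de_g)$; local boundedness of $\om_p$ follows because $g\mapsto\de_g$ carries compacta to bounded subsets of $\mathscr{A}(G)$ (bounded sets of the Fr\'echet space $\cO(G)$ are uniformly bounded on compacta, so the image is weakly, hence strongly, bounded) and a continuous prenorm is bounded on bounded sets; directedness of both families and the identification of the completion under a directed family of prenorms with the reduced projective limit of the Banach completions are standard. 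The only cosmetic remark is that the definition of $V_\up$ in the paper requires $\up\ge 1$, which your $\max\{\cdot,1\}$ takes care of, and which one should also silently impose on the weights indexing the projective limit (as the paper does). Your alternative formulation via the universal property, factoring a homomorphism into a Banach algebra $B$ through $\mathscr{A}_{\om}(G)$ with $\om(g)=\max\{\|\phi(\de_g)\|_B,1\}$, is the same computation in different clothing and is equally valid.
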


Let $U$ be a  generating set for  $G$, i.e., $e\in U$ and
$\bigcup_{n=0}^{\infty} U^{n} = G$, where $U^{0}\!:=\{e\}$.
Recall that a locally compact group $G$ is called \emph{compactly
generated} if there is a relatively compact generating set $U$.
For given $U$, we define a  function $\ell_U$ on $G$ by
\begin{equation}\label{wordlen}
\ell_U(g)\!: = \min \{ n \!: \, g \in U^{n} \}\,.
\end{equation}
It is easy to see that $\ell_U$ is a length function, it is called
a \emph{word length function} (cf., e.g.,
\cite[Exm.~1.1.7]{Sch93}).

For given non-negative functions  $\tau_1$ and $\tau_2$ on a set
$X$ we say that $\tau_1$   \emph{dominated by}  $\tau_2$ (at
infinity) if there are $C,D>0$ s.t.
$$\tau_1(x)\le C\tau_2(x) + D\qquad (x\in X)\,.$$
Non-negative functions  $\tau_1$ and $\tau_2$ on a set $X$ are
said to be \emph{equivalent} (at infinity) if $\tau_1$ dominated
by $\tau_2$   and $\tau_2$   dominated by $\tau_1$.

\begin{pr} \label{decprli}
\emph{(cf. \cite[Th.~5.3]{Ak08})} Let $G$ be a compactly generated
complex Lie group, and let $U$ be a relatively compact generating
set. Put
$$
\xi(g)\!:=e^{\ell_U(g)} \,,
$$
where $\ell_U$ is the word length function defined by~\eqref{wordlen}. Then
the sequence of submultiplicative prenorms $(\|\cdot\|_{\xi^n};\,n\in \N)$
determines the topology on  $\widehat{\mathscr{A}}(G)$, i.e.,
$$
\widehat{\mathscr{A}}(G)\cong  {\mathscr{A}}_{\xi^\infty}(G)\,.$$
\end{pr}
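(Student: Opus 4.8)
The plan is to combine Theorem~\ref{AMesw} with a cofinality argument reducing the full projective system of submultiplicative weights to the single submultiplicative weight $\xi$ and its powers. By Theorem~\ref{AMesw}, the Arens-Michael envelope $\widehat{\mathscr A}(G)$ is the completion of $\mathscr A(G)$ with respect to the family $(\|\cdot\|_\om)$, where $\om$ ranges over all submultiplicative weights on $G$; equivalently, the topology of $\widehat{\mathscr A}(G)$ is generated by the prenorms $\|\cdot\|_\om$. On the other hand ${\mathscr A}_{\xi^\infty}(G)$ is by definition the completion with respect to the countable subfamily $(\|\cdot\|_{\xi^n}\!:n\in\N)$. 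Since $\xi=e^{\ell_U}$ is a genuine submultiplicative weight (because $\ell_U$ is a length function) and a finite product of submultiplicative weights is again one, each $\xi^n$ belongs to the big family; hence the $\mathscr A_{\xi^\infty}(G)$-topology is coarser than (or equal to) the $\widehat{\mathscr A}(G)$-topology, and it suffices to show the reverse domination: every $\|\cdot\|_\om$ is continuous for the $(\|\cdot\|_{\xi^n})$-topology.

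To get that reverse domination, first I would show that every submultiplicative weight $\om$ on $G$ is dominated, in the exponential sense, by a multiple of $\ell_U$: that is, there exist constants $C,D>0$ with $\log^+\!\om(g)\le C\,\ell_U(g)+D$ for all $g\in G$, equivalently $\om(g)\le e^{D}\,\xi(g)^{C}$. This is where one uses that $U$ is \emph{relatively compact} and that $\om$ is locally bounded: put $K:=\overline U$, a compact set, so $\om$ is bounded on $K$, say $\om\le e^{c}$ on $K$ with $c\ge 0$; submultiplicativity then gives $\om(g)\le e^{c\,\ell_U(g)}$ for $g\ne e$, since $g\in U^{\ell_U(g)}\subset K^{\ell_U(g)}$ and $\om$ of a product of $n$ elements of $K$ is $\le e^{cn}$. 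Choosing $C:=c$ and $D:=c$ (to absorb the value at $e$) yields $\om\le e^{D}\xi^{C}$ pointwise on $G$.

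Next I would transfer this pointwise domination of weights to a domination of the associated prenorms on $\mathscr A(G)$. Recall $\|\cdot\|_\up$ is the Minkowski functional of $V_\up=\overline{\mathrm{absconv}}\{\up(x)^{-1}\de_x:x\in G\}$. If $\up_1\le\la\,\up_2$ pointwise for some constant $\la\ge 1$, then $\up_2(x)^{-1}\de_x=\la\,(\la\up_2(x))^{-1}\de_x\in\la\,V_{\up_1}$ for every $x$, whence $V_{\up_2}\subset\la\,V_{\up_1}$ after taking closed absolutely convex hulls, and therefore $\|\cdot\|_{\up_2}\le\la\,\|\cdot\|_{\up_1}$. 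Applying this with $\up_1=\xi^{C}$ (taking $n\ge C$ an integer so $\xi^{C}\le\xi^{n}$, hence $\|\cdot\|_{\xi^{C}}\le\|\cdot\|_{\xi^{n}}$) and $\up_2=\om$ and $\la=e^{D}$ gives $\|\cdot\|_\om\le e^{D}\,\|\cdot\|_{\xi^{n}}$ on $\mathscr A(G)$. Thus the big family of prenorms and the countable family $(\|\cdot\|_{\xi^n})$ generate the same topology, so their completions coincide; combining with Theorem~\ref{AMesw} yields $\widehat{\mathscr A}(G)\cong\mathscr A_{\xi^\infty}(G)$.

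The main obstacle is the weight-to-prenorm passage and the precise bookkeeping of constants: one must be careful that the Minkowski-functional/closed-hull manipulation is valid (it is, because scaling and closure commute appropriately and $\la\ge 1$), and that the exceptional value $\om(e)$ as well as the case $g=e$ are absorbed into the additive constant $D$. The weight-domination step itself is essentially forced once $U$ is relatively compact and $\om$ locally bounded, so the conceptual content is entirely in the cofinality reduction; everything else is routine. (One may also note this is exactly the argument behind \cite[Th.~5.3]{Ak08}, to which the statement refers.)
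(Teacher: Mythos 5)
Your proposal is correct and follows essentially the same route as the paper, whose entire proof consists of the observation (citing Schweitzer and Akbarov) that every length function on a compactly generated group is dominated by $\ell_U$ --- exactly your ``bounded on $\overline{U}$ plus submultiplicativity'' step --- combined with Theorem~\ref{AMesw} and the passage from pointwise domination of weights to domination of the prenorms $\|\cdot\|_\om\le C\|\cdot\|_{\xi^n}$. One direction slip to fix in your Minkowski-functional lemma: from $\up_1\le\la\up_2$ you correctly get $V_{\up_2}\subset\la V_{\up_1}$, but since a larger set has a smaller gauge this yields $\|\cdot\|_{\up_1}\le\la\|\cdot\|_{\up_2}$ (larger weight gives larger prenorm), not the reverse as written; applying the corrected statement to the inequality you actually proved, $\om\le e^{D}\xi^{n}$, gives precisely $\|\cdot\|_\om\le e^{D}\|\cdot\|_{\xi^{n}}$, so your conclusion stands unchanged.
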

\begin{proof}
It is easy to show (see \cite[Th.~1.1.21]{Sch93} or
\cite[Th.~5.3]{Ak08}) that every length function on $G$ is
dominated  by $\ell_U$.  Therefore for each submultiplicative
weight $\om$ on $G$ there are $C>0$ and $n\in\N$ s.t.
$\|\cdot\|_\om\le C\|\cdot\|_{\xi^n}$.
\end{proof}

\subsection*{Holomorphic functions of exponential type}

For  a complex Lie group $G$, denote by $\cO_{exp}(G)$ the linear
subspace of $\cO(G)$ that contains every function $f$ s.t. there
is a submultiplicative weight $\om$ satisfying  $|f(g)|\le \om(g)$
for all $g\in G $. A~holomorphic function $f$ on $G$ is of
\emph{exponential type}, if  $f\in\cO_{exp}(G)$
\cite[Sect.~5.3.1]{Ak08}.

To define the topology on $\cO_{exp}(G)$ we use the following notation. For
a complex manifold $M$ and  a locally bounded function $\up\!:M\to
[1,+\infty)$ denote by $\cO_\up(M)$ the linear subspace of $\cO(M)$ defined
by
\begin{equation} \label{fupn}
\cO_\up(M)\!:=\Bigl\{ f\in\cO(M) \!: |f|_\up\!:=\sup_{x\in
M}{\up(x)}^{-1}{|f(x)|}<\infty\Bigr\}\,.
\end{equation}
It is easy to see that  $\cO_\up(M)$ is a Banach space w.r.t
$|\cdot|_\up$.  Put $\cO_{\up^\infty}(M)\!:=\bigcup_{n\in\N}
\cO_{\up^n}(M)$. We consider $\cO_{\up^\infty}(M)$ with the
inductive limit topology.

For a complex Lie group $G$, we have an inductive system of
Banach spaces
$$(\cO_\om(G):\,\text{$\om$ is a submultiplicative weight on
$G$})$$ with natural connecting homomorphisms. Note that
$\cO_{exp}(G)=\bigcup_{\om} \cO_\om(G)$. So we can consider $\cO_{exp}(G)$
as a locally convex space via  identification
$$
\cO_{exp}(G)=\varinjlim_{\om} \cO_\om(G).
$$

\begin{pr} \label{decinli}\emph{(cf. \cite[Th.~5.3]{Ak08})}
Let $G$ be a compactly generated complex Lie group,  $U$  a
relatively compact generating set, and $\xi$  defined as in
Proposition~\ref{decprli}. Then
$$
\cO_{exp}(G)=  \cO_{\xi^\infty}(G)
$$
as locally convex spaces.
\end{pr}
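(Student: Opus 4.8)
The plan is to deduce Proposition~\ref{decinli} from Proposition~\ref{decprli} by duality. Recall that by Akbarov's identification the strong dual of $\widehat{\mathscr{A}}(G)$ is exactly $\cO_{exp}(G)$, and more precisely the Banach space dual of each $\mathscr{A}_\om(G)$ is $\cO_\om(G)$ (with matching norms $|\cdot|_\om$ and $\|\cdot\|_\om$). So the natural thing is: Proposition~\ref{decprli} says the topology on $\widehat{\mathscr{A}}(G)$ is given by the single cofinal sequence of submultiplicative prenorms $(\|\cdot\|_{\xi^n})_{n\in\N}$, i.e.\ $\widehat{\mathscr{A}}(G)\cong\mathscr{A}_{\xi^\infty}(G)$ as the reduced projective limit $\varprojlim_n \mathscr{A}_{\xi^n}(G)$. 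Dualizing a reduced projective limit of Banach spaces gives the corresponding inductive limit of the dual Banach spaces; hence the strong dual is $\varinjlim_n \cO_{\xi^n}(G)=\cO_{\xi^\infty}(G)$. Comparing with $\cO_{exp}(G)=\varinjlim_\om\cO_\om(G)$, which is the dual of $\widehat{\mathscr{A}}(G)=\varprojlim_\om\mathscr{A}_\om(G)$, both spaces are the strong dual of the same Fréchet (indeed Fréchet--Arens--Michael) space, so they coincide as locally convex spaces.

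In more detail, the steps I would carry out are: first, record that for a locally bounded $\up\colon G\to[1,\infty)$ the dual of the Banach space $\mathscr{A}_\up(G)$ is isometrically $\cO_\up(G)$ — this is essentially the definition of $V_\up$ as the closed absolutely convex hull of $\{\up(x)^{-1}\de_x\}$, whose polar in $\cO(G)$ is precisely the unit ball $\{f:|f|_\up\le1\}$ of $\cO_\up(G)$; this is the content of \cite[Sect.~3.4.3]{Ak08}. Second, invoke Proposition~\ref{decprli}: the countable family $(\|\cdot\|_{\xi^n})$ is cofinal in the family of all continuous submultiplicative prenorms on $\mathscr{A}(G)$ (every $\|\cdot\|_\om$ is dominated by some $C\|\cdot\|_{\xi^n}$), and conversely each $\xi^n$ is itself a submultiplicative weight so $\|\cdot\|_{\xi^n}$ is among the defining prenorms. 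Hence the Fréchet algebra $\widehat{\mathscr{A}}(G)$ equals both $\varprojlim_n\mathscr{A}_{\xi^n}(G)$ and $\varprojlim_\om\mathscr{A}_\om(G)$ with the same topology. Third, apply the standard fact that the strong dual of a reduced projective limit of Banach spaces is the inductive limit of the dual Banach spaces (with the inductive limit topology), to both presentations. This yields $\cO_{\xi^\infty}(G)=\widehat{\mathscr{A}}(G)'=\cO_{exp}(G)$ as locally convex spaces, since cofinality of the index sets $(\xi^n)_n\hookrightarrow(\om)_\om$ makes the two inductive limits literally the same LCS (a cofinal subsystem of an inductive system yields an isomorphic inductive limit).

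The main point requiring care — and where I expect the real work to lie — is the bookkeeping of the duality at the level of topologies rather than just vector spaces, i.e.\ making sure ``dual of reduced projective limit $=$ inductive limit of duals'' is applied under correct hypotheses (the connecting maps $\mathscr{A}_{\xi^{n+1}}(G)\to\mathscr{A}_{\xi^n}(G)$ have dense range, so dualizing gives injective connecting maps $\cO_{\xi^n}(G)\to\cO_{\xi^{n+1}}(G)$, and the inductive limit is separated — but one should not need this to be a regular $(LB)$-space for the statement, only a bornological/barrelled comparison, which is exactly why Akbarov's setup is invoked). A secondary, more mundane obstacle is verifying that the bipolar computation identifies the closed absolutely convex hull $V_\up$ with $\{x\in\mathscr{A}(G):\|x\|_\up\le1\}$ and its polar with the $|\cdot|_\up$-unit ball of $\cO_\up(G)$; this is where one uses that $\cO(G)$ is reflexive (a Fréchet--Schwartz space, being nuclear) so that $\mathscr{A}(G)'=\cO(G)$ and the Hahn--Banach separation needed for the bipolar theorem is available. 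Once these duality identifications are in place, Proposition~\ref{decinli} follows immediately from Proposition~\ref{decprli} by transposition.
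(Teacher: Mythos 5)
Your proof is correct, but only because of its last step; the surrounding duality machinery is a detour, and in fact the paper's own proof is a one\--liner that coincides with that last step. Recall that $\cO_{exp}(G)=\varinjlim_{\om}\cO_\om(G)$ is the \emph{definition} of the topology on $\cO_{exp}(G)$, and $\cO_{\xi^\infty}(G)=\varinjlim_n\cO_{\xi^n}(G)$ is likewise a definition. So the statement reduces entirely to the observation that the family $(\xi^n)_{n\in\N}$ is mutually cofinal with the family of all submultiplicative weights: each $\xi^n$ is itself a submultiplicative weight, and every submultiplicative weight $\om$ satisfies $\om\le C\xi^n$ pointwise for some $C,n$, because every length function is dominated by $\ell_U$ (this is the same fact already used for Proposition~\ref{decprli}); a cofinal subsystem gives the same inductive limit. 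That is exactly the paper's proof (``Note again that each length function on $G$ is dominated by $\ell_U$''), and it is the parenthetical remark at the end of your third step. Everything you route through $\widehat{\mathscr{A}}(G)$ is unnecessary and carries two genuine hazards. First, circularity: in this paper the identification of $\cO_{exp}(G)$ with the strong dual of $\widehat{\mathscr{A}}(G)$ \emph{in the Banach\--inductive\--limit topology used here} is Proposition~\ref{AOdul}, whose proof invokes Propositions~\ref{decprli} \emph{and}~\ref{decinli}; Akbarov's own duality theorem uses Smith\--space topologies on the $\cO_\om(G)$, and the equivalence of the two topologizations is itself settled only via Proposition~\ref{AOdul} (see the Remark following the statement). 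Second, the fact ``strong dual of a reduced projective limit of Banach spaces equals the inductive limit of the duals'' is cited (Bonet--Dierolf) for \emph{countable} reduced projective limits of distinguished Fr\'echet spaces; applying it to the uncountable system $\varprojlim_\om\mathscr{A}_\om(G)$ as you propose is not licensed without first passing to the cofinal countable subsystem --- at which point you are back to the cofinality argument and the duality has bought you nothing. Strip the proof down to the cofinality observation and it is exactly the paper's argument.
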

\begin{proof}
Note again that  each length function on $G$ is dominated  by $\ell_U$.
\end{proof}
 \begin{rem}
In~\cite{Ak08}, Akbarov introduces  $\cO_{exp}(G)$  as an
inductive limit of the system $(\cO_{\om}(G))$, where each
$\cO_{\om}(G)$ is endowed with the topology of a Smith space, and
includes his results in more general context. In particular, if
$E$  is a stereotype locally convex space, he considers the dual
space $E^\bigstar$ endowed the topology of uniform convergence on
totally bounded subsets. Since each relatively compact subset of a
complete metric space is totally bounded  and each bounded subset
of a nuclear space is relatively compact \cite[\S~III.7.2,
Cor.~2]{SM}, we have $E^\bigstar=E'$ for every nuclear Fr\'echet
space $E$. So far as $\cO_{exp}(G)$ is dual to the nuclear
Fr\'echet space $\widehat{\mathscr{A}}(G)$ (see
Proposition~\ref{AOdul} below), our and Akbarov's approaches to
topology are equivalent. In addition, note that there is an
alternative proof of Proposition~\ref{AOdul}, which based
on~\cite[Th.1.11]{Ak08}.
\end{rem}

\begin{lm}\label{AupOup}
Let $M$ be a complex manifold.  For given locally bounded
function  $\up\!:M\to [1,+\infty)$, the pairing between
${\mathscr{A}}(M)$  and $\cO(M)$  induces the pairing  that makes
$\cO_\up(M)$ the dual Banach space to ${\mathscr{A}}_\up(M)$.
\end{lm}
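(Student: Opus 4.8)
The plan is to identify the closed unit ball of the conjectured dual pairing with the polar of $V_\up$, and then invoke the bipolar theorem together with the definition of $\|\cdot\|_\up$ as the Minkowski functional of $V_\up$. First I would fix the canonical pairing $\langle\cdot,\cdot\rangle$ between $\mathscr{A}(M)=\cO(M)'$ and $\cO(M)$, and observe that for $f\in\cO(M)$ the quantity $|f|_\up=\sup_{x\in M}\up(x)^{-1}|f(x)|$ equals $\sup\{|\langle\mu,f\rangle|:\mu\in\{\up(x)^{-1}\de_x:x\in M\}\}$, hence also equals $\sup\{|\langle\mu,f\rangle|:\mu\in V_\up\}$, because passing to the closed absolutely convex hull does not change the supremum of the absolute value of a continuous linear functional. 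Thus $\cO_\up(M)$ is exactly the set of $f\in\cO(M)$ that are bounded on $V_\up$, i.e. (since $V_\up$ is a $0$-neighbourhood in $\mathscr{A}(M)$ by \cite[Sect.~3.4.3]{Ak08}) the set of functionals on $\mathscr{A}(M)$ that extend continuously from the dense subspace to $\mathscr{A}_\up(M)$; and $|f|_\up$ is the corresponding operator norm. This already shows $\cO_\up(M)\subset\mathscr{A}_\up(M)'$ isometrically.

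Next I would argue surjectivity: every bounded functional on $\mathscr{A}_\up(M)$ restricts to a $\|\cdot\|_\up$-continuous functional on $\mathscr{A}(M)$, which is in particular continuous for the original topology of $\mathscr{A}(M)$ (as $\|\cdot\|_\up$ is continuous there), hence is given by a unique $f\in\cO(M)$ via the pairing; boundedness on $V_\up$ then forces $|f|_\up<\infty$, so $f\in\cO_\up(M)$. The norm computation in both directions uses that $\|\mu\|_\up=\inf\{t>0:\mu\in tV_\up\}$ and the duality $\sup_{\|\mu\|_\up\le 1}|\langle\mu,f\rangle|=\sup_{\mu\in V_\up}|\langle\mu,f\rangle|=|f|_\up$, the middle equality being the bipolar theorem applied to $V_\up$ (which is already closed and absolutely convex). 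Finally, since $\mathscr{A}(M)$ is dense in $\mathscr{A}_\up(M)$ by construction, a bounded functional on $\mathscr{A}_\up(M)$ is determined by its restriction, so the map $f\mapsto\langle\cdot,f\rangle$ is a bijective isometry $\cO_\up(M)\to\mathscr{A}_\up(M)'$.

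The only genuinely delicate point is the interchange of suprema over $\{\up(x)^{-1}\de_x\}$ and over its closed absolutely convex hull $V_\up$: this is the standard fact that a weak-$*$ continuous (indeed any) linear functional attains the same supremum of its modulus on a set and on its closed absolutely convex hull, but one should note that here $f$ is a priori only continuous on $\mathscr{A}(M)$ for its given topology, which is enough since $V_\up$ is formed inside that space. Everything else — local boundedness of $\up$ guaranteeing $V_\up$ is absorbing, hence $\|\cdot\|_\up$ well-defined, and completeness of $\mathscr{A}_\up(M)$ being built in — is routine and already recorded in the paragraph preceding the lemma. I do not expect any obstacle beyond bookkeeping; the proof is essentially an unwinding of the definitions of $V_\up$, $\|\cdot\|_\up$, $\mathscr{A}_\up(M)$ and $|\cdot|_\up$ via the bipolar theorem.
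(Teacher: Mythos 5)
Your argument is correct and follows essentially the same route as the paper: both establish the duality via the polar relation between the unit ball $S_\up$ of $\cO_\up(M)$ and the set $V_\up$ (which you re-derive by the hull/supremum argument where the paper simply cites \cite[Lem.~3.1]{Ak08}), and both recover a functional $h\in\mathscr{A}_\up(M)'$ as the holomorphic function $x\mapsto h(\de_x)$, using density of $\spn\{\de_x\}$ to conclude bijectivity. No gaps; your version additionally makes the isometry explicit.
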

\begin{proof}
Denote by $\langle \cdot,\cdot\rangle$ the pairing between
${\mathscr{A}}(M)$  and $\cO(M)$. Evidently,
$$S_\up\!:=\{f\in \cO(M):\,|f(x)|\le \up(x)\, \forall x \in M\}$$
is the unit ball in  $\cO_\up(M)$.   By \cite[Lem.~3.1]{Ak08}, the
set $S_\up$ is the polar of $V_\up$ (the closed  absolutely convex
hull of $\{\up(x)^{-1}\de_x:\,x\in M\}$) in ${\mathscr{A}}(M)$.
Therefore,
$$
|\langle \mu,f \rangle|\le \|\mu\|_\up\,|f|_\up\qquad (\mu\in
{\mathscr{A}}_\up(M),\,f\in\cO_\up(M))\,.
$$
So we have a bounded linear operator $\al\!:\cO(M)_\up\to
{\mathscr{A}}_\up(M)'$.

On the other hand,  suppose that $h\in {\mathscr{A}}_\up(M)'$. Let
$\rho_\up\!:{\mathscr{A}}(M)\to {\mathscr{A}}_\up(M)$ be the
completion map.  Then $h\rho_\up\in {\mathscr{A}}(M)'$ and the
corresponding function in $\cO(M)$ is determined by $x\mapsto
h\rho_\up(\de_x)$. Denote this function by $f(x)$. Since $h$ is
bounded,  there is $C>0$ s.t. $|f(x)|\le C\|\de_x\|_\up\le
C\up(x)$ for all $x\in M$, i.e., $f\in \cO_\up(M)$ and $|f|_\up\le
C$.   So we have a bounded linear operator
${\mathscr{A}}_\up(M)'\to\cO(M)_\up $, which we denote by $\be$.

It is obvious that $\be\al=1$. By the definition of
$\|\cdot\|_\up$, the linear span of $\{\de_x:\,x\in M\}$ is dense
in ${\mathscr{A}}_\up(M)$, so the operator $\be$ is injective.
Therefore we have a topological isomorphism.
\end{proof}

\begin{lm} \label{AOdul0}
Let $M$ be a complex manifold.  For given locally bounded
function  $\up\!:M\to [1,+\infty)$ and any  $n\in\N$ consider the
pairing $\langle \cdot,\cdot \rangle_{\up^n}$ between
${\mathscr{A}}(M)_{\up^n}$  and $\cO(M)_{\up^n}$ from
Lemma~\ref{AupOup}. If the Fr\'{e}chet space
$\mathscr{A}_{\up^\infty}(M)$ is reflexive, then there is a
paring between $\mathscr{A}_{\up^\infty}(M)$  and
$\cO_{\up^\infty}(M)$ that is compatible with all $\langle
\cdot,\cdot \rangle_{\up^n}$ and making
${\mathscr{A}}(M)_{\up^\infty}$  and $\cO(M)_{\up^\infty}$ strong
dual spaces to each other.
\end{lm}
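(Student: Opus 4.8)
The plan is to build the pairing by passing to the limit in the duality established in Lemma~\ref{AupOup}. First I recall the structure: $\mathscr{A}_{\up^\infty}(M)$ is by definition the completion of $\mathscr{A}(M)$ with respect to the increasing sequence of prenorms $(\|\cdot\|_{\up^n})_{n\in\N}$, hence a Fr\'echet space which is the projective limit $\varprojlim_n \mathscr{A}_{\up^n}(M)$ with the natural connecting maps. Dually, $\cO_{\up^\infty}(M)=\bigcup_n \cO_{\up^n}(M)$ carries the inductive limit topology, so it is $\varinjlim_n \cO_{\up^n}(M)$. Each $\cO_{\up^n}(M)$ is a Banach space and, by Lemma~\ref{AupOup}, is the strong dual of the Banach space $\mathscr{A}_{\up^n}(M)$; moreover the connecting maps on the $\mathscr{A}$-side and on the $\cO$-side are transposes of one another because they are all induced by the inclusions coming from the single pairing between $\mathscr{A}(M)$ and $\cO(M)$.

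Next I would assemble the global pairing. Since $\up\le\up^n$ pointwise, an element of $\cO_{\up}(M)$ lies in every $\cO_{\up^n}(M)$, and more generally $\cO_{\up^n}(M)\subset\cO_{\up^{n+1}}(M)$; a compatible family of functionals $\langle\cdot,\cdot\rangle_{\up^n}$ therefore glues to a single bilinear form on $\mathscr{A}_{\up^\infty}(M)\times\cO_{\up^\infty}(M)$: given $\mu\in\mathscr{A}_{\up^\infty}(M)$ and $f\in\cO_{\up^\infty}(M)$, pick $n$ with $f\in\cO_{\up^n}(M)$, let $\mu_n$ be the image of $\mu$ in $\mathscr{A}_{\up^n}(M)$, and set $\langle\mu,f\rangle:=\langle\mu_n,f\rangle_{\up^n}$; compatibility of the $\langle\cdot,\cdot\rangle_{\up^n}$ makes this independent of $n$, and it is separately continuous by construction. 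On the dense subspace $\mathscr{A}(M)\times\cO(M)$ it agrees with the original pairing, which pins it down uniquely.

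The core of the argument is then the identification of strong duals, and this is where I would invoke the standard duality between projective and inductive limits of Banach spaces together with the reflexivity hypothesis. Concretely, the strong dual of the projective limit $\varprojlim_n \mathscr{A}_{\up^n}(M)$ is the inductive limit $\varinjlim_n \mathscr{A}_{\up^n}(M)'=\varinjlim_n \cO_{\up^n}(M)=\cO_{\up^\infty}(M)$ — here I use that a strongly convergent sequence of Banach-space duals with transpose connecting maps dualizes correctly, and that the inductive limit topology on $\cO_{\up^\infty}(M)$ coincides with the strong dual topology (this uses that bounded subsets of the Fr\'echet space $\mathscr{A}_{\up^\infty}(M)$ behave well, e.g.\ equicontinuity/boundedness arguments for (LB)-duals of Fr\'echet spaces). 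Conversely, to get $\mathscr{A}_{\up^\infty}(M)=(\cO_{\up^\infty}(M))'_\beta$ one dualizes once more: the strong dual of the inductive limit $\varinjlim_n\cO_{\up^n}(M)$ is $\varprojlim_n\cO_{\up^n}(M)'=\varprojlim_n\mathscr{A}_{\up^n}(M)''$, and the reflexivity of the Fr\'echet space $\mathscr{A}_{\up^\infty}(M)$ — which is what lets me replace each $\mathscr{A}_{\up^n}(M)''$ effectively by $\mathscr{A}_{\up^n}(M)$ at the level of the limit, or equivalently guarantees that the canonical map $\mathscr{A}_{\up^\infty}(M)\to(\cO_{\up^\infty}(M))'_\beta$ is onto — closes the loop. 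The main obstacle I anticipate is precisely this last point: bare commutation of dualization with limits is delicate for (LF)/(LB) spaces, and the hypothesis ``$\mathscr{A}_{\up^\infty}(M)$ reflexive'' is exactly the hook that makes the biduality collapse; I would phrase the argument so that reflexivity is used to see that the evaluation map into $(\cO_{\up^\infty}(M))'$ is a topological isomorphism onto, rather than attempting a level-by-level reflexivity statement for the individual Banach spaces (which need not hold).
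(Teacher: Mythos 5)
Your plan follows essentially the same route as the paper: realize $\mathscr{A}_{\up^\infty}(M)$ as the reduced projective limit of the Banach spaces $\mathscr{A}_{\up^n}(M)$, identify its strong dual with $\varinjlim_n\cO_{\up^n}(M)=\cO_{\up^\infty}(M)$ via Lemma~\ref{AupOup}, and then use reflexivity to recover $\mathscr{A}_{\up^\infty}(M)$ as the strong dual of $\cO_{\up^\infty}(M)$. The one point you leave vague --- that the strong dual topology on the dual of the projective limit agrees with the inductive limit topology --- is exactly where the paper anchors the hypothesis: a reflexive Fr\'echet space is distinguished, and for a distinguished Fr\'echet space the strong dual of a reduced projective limit of Banach spaces is the inductive limit of the duals (the Bonet--Dierolf reference).
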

\begin{proof}
Every reflexive Fr\'{e}chet space is distinguished, i.e., the
strong  dual is barreled \cite[\S 23.7]{Kot1}. For every
representation $E = \varprojlim E_n$ of a distinguished
Fr\'{e}chet space $E$ as a reduced (each $E\to E_n$ has dense
range) projective limit of a sequence of Banach spaces, its strong
dual space $E'$ equals $\varinjlim E_n'$ (see, e.g.,
 \cite{BDi}, Introduction).    So Lemma~\ref{AupOup} implies that
$\mathscr{A}_{\up^\infty}(G)'\cong \cO_{\up^\infty}(G)$. Finally,
reflexivity implies  $\cO_{\up^\infty}(G)'\cong
\mathscr{A}_{\up^\infty}(G)$.
\end{proof}

\begin{pr} \label{AOdul}
Let $G$ be a compactly generated complex Lie group.  Then there is a paring
between    $\widehat{\mathscr{A}}(G)$  and $\cO_{exp}(G)$ that is compatible
with all $\langle \cdot,\cdot \rangle_\om$, where $\om$ is a
submultiplicative weight, and making  $\widehat{\mathscr{A}}(G)$  and
$\cO_{exp}(G)$ strong dual spaces to each other.
\end{pr}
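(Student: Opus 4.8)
The plan is to reduce the statement to a single weight and then quote Lemma~\ref{AOdul0}. Fix a relatively compact generating set $U$ for $G$ and put $\xi\!:=e^{\ell_U}$ as in Proposition~\ref{decprli}. By that proposition $\widehat{\mathscr{A}}(G)\cong\mathscr{A}_{\xi^\infty}(G)$, and by Proposition~\ref{decinli} $\cO_{exp}(G)=\cO_{\xi^\infty}(G)$ as locally convex spaces. So it is enough to produce the required pairing between $\mathscr{A}_{\xi^\infty}(G)$ and $\cO_{\xi^\infty}(G)$, and for this I would apply Lemma~\ref{AOdul0} with $M=G$ and $\up=\xi$, once its hypothesis is checked, namely that the Fr\'echet space $\mathscr{A}_{\xi^\infty}(G)=\varprojlim_n\mathscr{A}_{\xi^n}(G)$ is reflexive.

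To verify reflexivity I would argue that $\mathscr{A}_{\xi^\infty}(G)$ is in fact a nuclear Fr\'echet space. The space $\cO(G)$ is nuclear Fr\'echet, hence its strong dual $\mathscr{A}(G)=\cO(G)'$ is nuclear. Each $\xi^n$ is a submultiplicative weight, so each $\|\cdot\|_{\xi^n}$ is continuous on $\mathscr{A}(G)$ and the topology they define is coarser than the original one; this coarser topology is still nuclear (every linear map from $\mathscr{A}(G)$ with the coarser topology into a Banach space is continuous for the original topology, hence nuclear, which is the standard criterion for nuclearity). Therefore its completion $\mathscr{A}_{\xi^\infty}(G)$ is nuclear, and being metrizable and complete it is nuclear Fr\'echet, hence Montel, hence reflexive. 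Now Lemma~\ref{AOdul0} gives a pairing between $\widehat{\mathscr{A}}(G)=\mathscr{A}_{\xi^\infty}(G)$ and $\cO_{exp}(G)=\cO_{\xi^\infty}(G)$, compatible with all the pairings $\langle\cdot,\cdot\rangle_{\xi^n}$ of Lemma~\ref{AupOup}, under which the two spaces are strong dual spaces of each other.

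It remains to upgrade the compatibility from the weights $\xi^n$ to an arbitrary submultiplicative weight $\om$. As recalled in the proof of Proposition~\ref{decprli}, every length function on $G$ is dominated by $\ell_U$, so there are $C>0$ and $n\in\N$ with $\|\cdot\|_\om\le C\|\cdot\|_{\xi^n}$, whence a continuous map $\mathscr{A}_{\xi^n}(G)\to\mathscr{A}_\om(G)$ and a continuous inclusion $\cO_\om(G)\hookrightarrow\cO_{\xi^n}(G)$. Since all the pairings $\langle\cdot,\cdot\rangle_{\xi^n}$ and $\langle\cdot,\cdot\rangle_\om$ restrict to the canonical pairing between $\mathscr{A}(G)$ and $\cO(G)$ on the dense subspace $\mathscr{A}(G)$, they agree wherever they are jointly defined, and compatibility of the new pairing with $\langle\cdot,\cdot\rangle_\om$ follows by continuity. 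The main obstacle is the reflexivity (equivalently, the nuclearity) of $\widehat{\mathscr{A}}(G)$; once that is in place, the rest is routine bookkeeping with dense subspaces and with the domination estimates supplied by the word length function.
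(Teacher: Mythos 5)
Your overall scheme coincides with the paper's: reduce to the single weight $\xi=e^{\ell_U}$ via Propositions~\ref{decprli} and~\ref{decinli}, and then apply Lemma~\ref{AOdul0}, whose only hypothesis is the reflexivity of $\mathscr{A}_{\xi^\infty}(G)$. The gap is in how you establish that hypothesis. You claim that the topology on $\mathscr{A}(G)$ given by the seminorms $\|\cdot\|_{\xi^n}$ is nuclear because it is coarser than the (nuclear) original topology, ``since every linear map into a Banach space continuous for the coarser topology is continuous for the original one, hence nuclear.'' This does not work: a map that is nuclear as a map from $(E,\tau)$ need not be nuclear as a map from $(E,\tau')$ when $\tau'\subset\tau$, because the representing functionals are only $\tau$-equicontinuous, not $\tau'$-equicontinuous. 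The general principle ``a coarser locally convex topology on a nuclear space is nuclear'' is in fact false: $\CC^{(\N)}$ with the finest locally convex topology is nuclear, but the coarser topology induced by the $\ell^2$-norm makes it an infinite-dimensional normed space, which is never nuclear. Concretely, in your situation each Banach space $\mathscr{A}_{\xi^n}(G)$ is infinite-dimensional, so nuclearity of $\varprojlim_n\mathscr{A}_{\xi^n}(G)$ must come from the connecting maps $\mathscr{A}_{\xi^{n+1}}(G)\to\mathscr{A}_{\xi^n}(G)$ being nuclear, and that is not a formal consequence of the nuclearity of $\mathscr{A}(G)=\cO(G)'$.

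The paper closes exactly this point by citing Theorems 5.10 and 6.2 of \cite{Ak08}, which assert that $\widehat{\mathscr{A}}(G)$ is nuclear; this is a substantive result about holomorphic functions and submultiplicative weights, not soft functional analysis. If you replace your nuclearity paragraph by that citation (or reprove Akbarov's estimate), the remainder of your argument --- including the upgrade of compatibility from the weights $\xi^n$ to an arbitrary submultiplicative weight $\om$ via density of $\mathscr{A}(G)$ --- is correct and matches the paper's proof.
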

\begin{proof}
It follows from Propositions~\ref{decprli}  and~\ref{decinli} that
$$
\widehat{\mathscr{A}}(G)\cong
{\mathscr{A}}_{\xi^\infty}(G)\quad{\text{and}}\quad \cO_{exp}(G)\cong
\cO_{\xi^\infty}(G)\,.
$$
By \cite[Ths.~5.10, 6.2]{Ak08}, $\widehat{\mathscr{A}}(G)$ is
nuclear. Therefore it is reflexive and we can apply
Lemma~\ref{AOdul0}.
\end{proof}

Denote by ${\mathscr P}(G)$ the algebra of polynomial functions
on a simply connected complex Lie group $G $ w.r.t. the canonical
coordinates of the first kind (i.e., via the identification
$\exp\!:\fg\to G$) and consider the natural
paring
\begin{equation}\label{ufPp}
 \langle X,f\rangle\!:= Xf(e)\qquad (X\in U(\fg),\,
f\in {\mathscr P}(G)).
\end{equation}

\begin{co} \label{paUwhOe}
Let $G$ be a simply connected complex Lie group with Lie
algebra~$\fg$. Then the locally convex spaces $\widehat{U}(\fg)$
and $\cO_{exp}(G)$ are strong dual spaces to each other w.r.t.
the pairing extending the pairing between $U(\fg)$ and ${\mathscr
P}(G)$.
\end{co}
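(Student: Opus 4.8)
The plan is to transport the duality of Proposition~\ref{AOdul} along the isomorphism $\widehat\tau$ of Proposition~\ref{csicHFG}. First I would note that Proposition~\ref{AOdul} applies here: a simply connected complex Lie group is connected, hence compactly generated (for any relatively compact open neighbourhood $U$ of $e$, the set $\bigcup_{n\ge0}U^n$ is an open subgroup of the connected group $G$, so it equals $G$). Thus there is a pairing $\langle\cdot,\cdot\rangle$ between $\widehat{\mathscr A}(G)$ and $\cO_{exp}(G)$ making them strong dual spaces to each other, and — being compatible with all the $\langle\cdot,\cdot\rangle_\om$ of Lemmas~\ref{AupOup} and~\ref{AOdul0} — restricting on $\mathscr A(G)\times\cO_{exp}(G)$ to the canonical pairing $(\mu,f)\mapsto\mu(f)$ between analytic functionals and holomorphic functions.

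Next, by Proposition~\ref{csicHFG}, $\widehat\tau\!:\widehat U(\fg)\to\widehat{\mathscr A}(G)$ is an isomorphism of Arens-Michael algebras, in particular a topological isomorphism of locally convex spaces; since the transpose of a topological isomorphism is a topological isomorphism of the strong duals, the formula
$$
\langle a,f\rangle_U\!:=\langle\widehat\tau(a),f\rangle\qquad(a\in\widehat U(\fg),\ f\in\cO_{exp}(G))
$$
yields a pairing with respect to which $\widehat U(\fg)$ and $\cO_{exp}(G)$ are strong dual spaces to one another. It then remains to check that $\langle\cdot,\cdot\rangle_U$ restricted to $U(\fg)\times\mathscr P(G)$ coincides with~\eqref{ufPp}. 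For this I would use that polynomial functions in the canonical coordinates of the first kind are of exponential type, so that $\mathscr P(G)\subseteq\cO_{exp}(G)$, together with the fact that $\widehat\tau$ is determined by $\widehat\tau\,\io_{U(\fg)}=\io_{\mathscr A(G)}\,\tau$. Then for $X\in U(\fg)$ and $f\in\mathscr P(G)$, combining the compatibility of $\langle\cdot,\cdot\rangle$ with the $\mathscr A(G)$--$\cO(G)$ pairing with the definition~\eqref{taudef} of $\tau$,
$$
\langle\io_{U(\fg)}(X),f\rangle_U=\langle\io_{\mathscr A(G)}(\tau(X)),f\rangle=\langle\tau(X),f\rangle=\langle\de_e,Xf\rangle=Xf(e),
$$
which is exactly~\eqref{ufPp}.

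I expect no essential obstacle here: the argument is a formal assembly of Propositions~\ref{csicHFG} and~\ref{AOdul}. The only ingredient that is not purely formal is the inclusion $\mathscr P(G)\subseteq\cO_{exp}(G)$ (along with the routine but mildly tedious bookkeeping that all the pairings occurring in Lemmas~\ref{AupOup},~\ref{AOdul0} and Proposition~\ref{AOdul} restrict coherently to the duality between $\mathscr A(G)$ and $\cO(G)$). For a nilpotent $G$ this inclusion reflects the homogeneous-norm estimates underlying Theorem~\ref{eelUsi}; it can equally be regarded as a classical fact on polynomial versus exponential growth, so I would simply invoke it.
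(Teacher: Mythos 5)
Your argument is exactly the paper's: the paper likewise observes that $\langle X,f\rangle=\langle\de_e,Xf\rangle$ identifies the pairing \eqref{ufPp} with the one induced by $\tau$, notes that a simply connected complex Lie group is compactly generated, and then combines Propositions~\ref{csicHFG} and~\ref{AOdul}. Your version merely spells out the transport of duality along $\widehat\tau$ and the compatibility check on $U(\fg)\times\mathscr P(G)$, which the paper leaves implicit; there is no substantive difference.
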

\begin{proof}
Obviously $\langle X,f\rangle= \langle \de_e, Xf\rangle$, so the
homomorphism $\tau\!:U(\fg) \to \mathscr{A}(G)$ defined
in~\eqref{taudef} induces a pairing between $U(\fg)$ and $\cO(G)$
that is continuously extended to $\widehat{U}(\fg)$. Finally, note
that each simply connected complex Lie group is compactly
generated \cite[Th.~7.4]{HeRo} and apply
Propositions~\ref{csicHFG} and~\ref{AOdul}.
\end{proof}

\section{Growth of  word length functions}\label{RM}
Let $\fg$ be a nilpotent complex or real Lie algebra and
$\mathscr{F}$  the lower central series of~$\fg$, which is
defined  by $\fg_i\!:=[\fg, \fg_{i-1}]$.   Fix an
$\mathscr{F}$-basis $e_1,\ldots, e_m$  in~$\fg$. For the simply
connected Lie group $G$  associated with~$\fg$ consider the
canonical coordinates of the first kind  $(t_1,\ldots, t_ m)$
and the canonical coordinates  of the second kind
$(\bar{t}_1,\ldots, \bar{t}_ m)$ determined by~$(e_i)$, i.e.,
$$
g=\exp\Bigl(\sum_{i=1}^m  t_i e_i\Bigr)=\prod_{i=1}^m \exp(\bar
t_i\,e_i) \qquad(g\in G)\,.
$$
Remind the definition of $w_i$ from~\eqref{widef} and set
\begin{equation}
\label{sidef}
 \si(g):=\max_i|t_i|^{1/w_i}\,,\qquad \bar\si(g):=\max_i|\bar{t}_i|^{1/w_i}\,.
\end{equation}
In  \cite{Go79} and \cite{Pir_stbflat} $\si$ is denoted by
$|\cdot|$ and is called 'homogeneous norm'.

The following theorem is the heart of our argument. See the proof
in Appendix.

\begin{thm}\label{eelUsi}
Let $G$ be a simply connected nilpotent (complex or real) Lie
group, and let $\ell$ be a word length function corresponding to
a relatively compact generating set. Then $\ell$, $\si$ and
$\bar\si$  are equivalent (at infinity).
\end{thm}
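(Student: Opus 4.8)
The plan is to prove the two-sided estimate by comparing all three functions to a common ``homogeneous'' yardstick built from the filtration. Since $\ell$, $\si$ and $\bar\si$ are functions on the group $G$ and we want equivalence at infinity, it suffices to prove three chains of domination; transitivity of the domination relation then closes the loop. I would organize the argument around the following reductions: first, $\bar\si$ is dominated by $\ell$ and $\si$ is dominated by $\bar\si$ (or directly by $\ell$); second, $\ell$ is dominated by $\si$. The first direction is ``soft'' and the second is the analytic core.

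For the easy direction ($\bar\si,\si$ dominated by $\ell$), I would argue as follows. Fix a relatively compact generating set $U$ with $e\in U$; since canonical coordinates of the second kind are global diffeomorphisms $\R^m\to G$ (or $\CC^m\to G$) for simply connected nilpotent $G$, the coordinates $\bar t_i$ of any element of $U$ are bounded, say $|\bar t_i|\le c$. Using the Baker--Campbell--Hausdorff formula in canonical coordinates of the second kind, the coordinates of a product $g_1\cdots g_n$ of elements of $U$ are polynomials in the individual coordinates, and the grading $[\fg_i,\fg_j]\subset\fg_{i+j}$ forces the coordinate $\bar t_i$ of the product to be a polynomial of weighted degree at most $w_i$ in the $n$ sets of variables; hence $|\bar t_i(g_1\cdots g_n)|\le C n^{w_i}$ for a constant depending only on $U$ and $\fg$. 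Taking $g=g_1\cdots g_n$ with $n=\ell_U(g)$ gives $\bar\si(g)=\max_i|\bar t_i(g)|^{1/w_i}\le C^{1/w_1} n = C' \ell_U(g)$, i.e.\ $\bar\si$ is dominated by $\ell$. The same BCH/grading bookkeeping, or the polynomial change of coordinates between coordinates of the first and second kind (again respecting the weighted degree because of nilpotency), gives that $\si$ and $\bar\si$ are equivalent, so $\si$ is dominated by $\ell$ as well.

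For the hard direction ($\ell$ dominated by $\si$) I would show that every element $g$ with $\si(g)\le R$ can be written as a product of at most $C R + D$ elements of $U$. Choose $U$ to contain a fixed symmetric relatively compact neighbourhood of $e$; it is enough to treat large $R$. Write $g=\prod_i\exp(\bar t_i e_i)$ and reduce to estimating the word length of each single factor $\exp(\bar t_i e_i)$, where $|\bar t_i|\le R^{w_i}$. For the factors with $w_i=1$ (coming from a complement of $[\fg,\fg]$) a one-parameter subgroup through a bounded generator reaches $\exp(\bar t_i e_i)$ in $O(|\bar t_i|)=O(R)$ steps. For $w_i\ge 2$, $e_i$ lies in a deeper term of the lower central series, so it is a linear combination of iterated brackets of length $w_i$ of the degree-one generators; using a commutator identity of the type $\exp(s[x,y])$ being expressible as a bounded product of elements $\exp(\pm\la x),\exp(\pm\mu y)$ with $\la,\mu=O(s^{1/w_i})$ (the standard ``commutator shortcut'' that is the group-theoretic reflection of the weighted grading), one reaches $\exp(\bar t_i e_i)$ in $O(|\bar t_i|^{1/w_i})=O(R)$ steps. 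Multiplying the $m$ factors together gives $\ell_U(g)\le C R + D$ with $R=\si(g)$, which is the desired domination. The main obstacle is precisely making this last commutator-shortcut estimate uniform and correctly bookkeeping the number of generators needed for a deep basis vector: one must iterate the two-element commutator trick $w_i-1$ times and verify that each iteration only multiplies the step count by a constant while taking the appropriate fractional power of the target parameter, so that the final count is genuinely linear in $\si(g)$ rather than merely polynomial. This is where a careful induction on the nilpotency class, with the inductive hypothesis applied to $G/Z(G)$ or to a quotient by a central piece of the last filtration step, does the real work; the connection with invariant Riemannian distances (deferred to the Appendix) then follows because the Riemannian distance is sandwiched between constant multiples of $\ell_U$ up to additive constants by a standard covering argument.
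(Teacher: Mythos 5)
Your overall architecture coincides with the paper's: one closes a cycle of dominations among $\ell$, $\si$ and $\bar\si$, using the fact that word length functions for different relatively compact generating sets are equivalent, with a ``polynomial growth of coordinates'' half and a ``commutator shortcut'' half (the paper proves $\ell\lesssim\bar\si\lesssim\si\lesssim\ell'$ for two convenient generating sets). You have also correctly located the hard direction. The problem is that at both genuinely delicate points the proposal asserts precisely what has to be proved. In the direction ``$\si,\bar\si$ dominated by $\ell$'' you say that the coordinate $\bar t_i$ of an $n$-fold product of elements of $U$ is a polynomial of weighted degree at most $w_i$, \emph{hence} bounded by $Cn^{w_i}$ with $C$ independent of $n$. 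The weighted-degree claim controls each monomial, but the polynomial expressing the $n$-fold product has a number of monomials and a set of coefficients that a priori depend on $n$; and iterating the two-factor estimate $\si(g_1g_2)\le C\max\{\si(g_1),\si(g_2)\}+D$ (Lemma~\ref{sisubpo}) $n$ times only gives a bound of order $C^n$, which is useless. The paper resolves this by building a special norm on $\fg$ (Lemma~\ref{normtrick}) whose constant in the iterated-bracket estimate is tuned against the Baker--Campbell--Hausdorff coefficients, so that the induction hypothesis $\|V_j\|\le (n-1)^j$ on the $\fv_j$-components upgrades to $\|W_j\|\le 2(n-1)^{j-1}\le n^j$ after multiplication by one more generator; some substitute for this uniform-in-$n$ bookkeeping is indispensable.

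In the direction ``$\ell$ dominated by $\si$'' you invoke a commutator identity expressing $\exp(s[x,y])$ as a bounded product of $\exp(\pm\la x)$, $\exp(\pm\mu y)$, and you yourself flag as ``the main obstacle'' the task of iterating it $w_i-1$ times with uniform control, to be handled by ``a careful induction on the nilpotency class'' via $G/Z(G)$ --- but this induction is not carried out, and it cannot be skipped: the naive commutator word produces $\exp(s[x,y])$ only modulo higher-order corrections which must themselves be shortened, and it is exactly the uniformity of that recursion that makes the count linear in $\si(g)$ rather than merely polynomial. The paper sidesteps error terms entirely by means of the exact identities of Lemma~\ref{fromV} (a special case of \cite[Lem.~IV.5.1]{VSC92}): in a nilpotent Lie algebra both $\exp(\sum_s Y_s)$ and $\exp([Y_{u_1},[Y_{u_2},\cdots,Y_{u_j}]\cdots])$ equal finite products $\prod_r\exp(\al_r Y_{s_r})$ with universal coefficients. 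One then writes $te_i=\sgn(t)\sum_U\mu_U E_U(|t|^{1/\la(U)})$ --- note that $e_i\in\fg_{w_i}$ is a combination of iterated brackets of length \emph{at least} $w_i$, not exactly $w_i$, so each bracket must be rescaled by $|t|^{1/\la(U)}$ according to its own length $\la(U)$, and only afterwards does one use $\la(U)\ge w_i$ --- and reads off $\ell(\exp(te_i))=O(|t|^{1/w_i})$ with nothing left to control. Your plan can be completed along the lines you indicate, but as written the two steps that carry the entire content of the theorem are missing.
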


Now we can get the following explicit description of
$\cO_{exp}(G)$.

\begin{thm} \label{exptypdesc}
Let $G$ be a simply connected nilpotent complex Lie group with
Lie algebra $\fg$, and let  $(t_1,\ldots, t_ m)$ be the canonical
coordinates of the first kind associated with an
$\mathscr{F}$-basis in $\fg$, where $\mathscr{F}$ is the lower
central series. Then
\begin{multline*}
\cO_{exp}(G)= \bigl\{f\in \cO(G):\, \\ \exists C>0,\, \exists
r\in \R_+ \, \text{s.t.}\,|f(t_1,\ldots, t_ m)|\le C
e^{r\max_i|t_i|^{1/w_i}}\,\forall t_1,\ldots, t_ m \bigr\}
\end{multline*}
and  we have
$$
\cO_{exp}(G)\cong \varinjlim_{r\in \R_+} \cO_{\eta^r}(G)
$$
as locally convex spaces, where $\eta(t_1,\ldots, t_
m)\!:=e^{\max_i|t_i|^{1/w_i}}$ and the Banach space $\cO_{\eta^r}(G)$ is
defined  as in~\eqref{fupn}.

Moreover, we can replace $(t_1,\ldots, t_ m)$ by the canonical
coordinates of the second kind.
\end{thm}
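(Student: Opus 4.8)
\textbf{Proof proposal for Theorem~\ref{exptypdesc}.}

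The plan is to combine the abstract duality/topology results of Section~\ref{PR} with the metric estimate of Theorem~\ref{eelUsi}. First I would invoke Corollary~\ref{paUwhOe} (or equivalently Proposition~\ref{AOdul}) to identify $\cO_{exp}(G)$ with $\cO_{\xi^\infty}(G)$, where $\xi = e^{\ell_U}$ for a fixed relatively compact generating set $U$ of $G$; note $G$ is compactly generated since it is simply connected. Then the work reduces to showing that the inductive system $(\cO_{\xi^n}(G);\,n\in\N)$ and the inductive system $(\cO_{\eta^r}(G);\,r\in\R_+)$ are cofinal in each other, which gives both the set-theoretic equality $\cO_{exp}(G)=\bigcup_r\cO_{\eta^r}(G)$ and the isomorphism of the inductive limits as locally convex spaces.

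The cofinality is exactly where Theorem~\ref{eelUsi} enters. That theorem says $\ell$ and $\si$ are equivalent at infinity, i.e., there are constants $C_1,C_2,D_1,D_2>0$ with $\ell(g)\le C_1\si(g)+D_1$ and $\si(g)\le C_2\ell(g)+D_2$ for all $g\in G$. Passing to exponentials, $\xi(g)\le e^{D_1}e^{C_1\si(g)} = e^{D_1}\eta(g)^{C_1}$ and, symmetrically, $\eta(g)\le e^{D_2}\xi(g)^{C_2}$. Consequently $\cO_{\xi^n}(G)\subseteq \cO_{\eta^r}(G)$ once $r\ge nC_1$ (absorbing the constant $e^{nD_1}$ into the $C$ in the definition of $|\cdot|_{\eta^r}$), and conversely $\cO_{\eta^r}(G)\subseteq \cO_{\xi^n}(G)$ once $n\ge rC_2$. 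Since the corresponding inclusion maps are continuous (norm-decreasing up to a constant), the two inductive systems have the same inductive limit, which establishes the displayed formula for $\cO_{exp}(G)$ and the isomorphism $\cO_{exp}(G)\cong\varinjlim_{r}\cO_{\eta^r}(G)$. To see that the functions in the claimed set are holomorphic and really exhaust $\cO_{exp}(G)$, one checks that $\eta^r$ is itself dominated by a submultiplicative weight (again via Theorem~\ref{eelUsi}, since $\si$ is dominated by a length function) so that $\cO_{\eta^r}(G)\subseteq\cO_{exp}(G)$, and conversely every submultiplicative weight $\om$ satisfies $\om\le e^{\ell_U}\cdot(\text{const})$ up to domination, hence is dominated by some $\eta^r$.

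The final sentence, that the canonical coordinates of the second kind may be used instead, follows immediately because Theorem~\ref{eelUsi} asserts that $\bar\si$ is also equivalent to $\ell$ (and hence to $\si$) at infinity; the argument of the previous paragraph applies verbatim with $\eta$ replaced by $\bar\eta(\bar t_1,\dots,\bar t_m):=e^{\max_i|\bar t_i|^{1/w_i}}$. The main obstacle here is not in this proof at all — it is entirely carried by Theorem~\ref{eelUsi}, whose proof is deferred to the Appendix; modulo that input, the present argument is a routine cofinality-of-inductive-limits computation together with bookkeeping of the additive constants $D_i$ when exponentiating a length-function inequality into a weight inequality.
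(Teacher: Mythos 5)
Your proposal is correct and follows essentially the same route as the paper: identify $\cO_{exp}(G)$ with $\cO_{\xi^\infty}(G)$, then use the equivalence of $\ell_U$ with $\si$ and $\bar\si$ from Theorem~\ref{eelUsi} to run a cofinality argument on the two inductive systems (this is the paper's Lemma~\ref{eqindlim}), finishing with the observation that the limit over $r\in\R_+$ equals the limit over $n\in\N$. The only slip is a citation: the identification $\cO_{exp}(G)=\cO_{\xi^\infty}(G)$ comes from Proposition~\ref{decinli}, not from Corollary~\ref{paUwhOe} or Proposition~\ref{AOdul}, which concern duality with $\widehat{U}(\fg)$ and $\widehat{\mathscr{A}}(G)$ rather than the inductive-limit description.
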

We need the following lemma.
\begin{lm}\label{eqindlim}
Let $\tau_1$ and $\tau_2$ be non-negative locally bounded
functions on a complex manifold~$M$. If $\tau_1$ and $\tau_2$ are
equivalent  and  $\up_i(x)\!:=e^{\tau_i(x)}$ $(i=1,2)$, then
$\cO_{\up_1^\infty}(M) = \cO_{\up_2^\infty}(M)$ as subset of
$\cO(M)$ and as a locally convex space.
\end{lm}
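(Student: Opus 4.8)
The plan is to show that the two families of Banach spaces $(\cO_{\up_1^n}(M))_{n\in\N}$ and $(\cO_{\up_2^n}(M))_{n\in\N}$ are \emph{cofinal} in each other, with the inclusions continuous; this immediately gives equality of the inductive limits both as sets and as locally convex spaces, since the inductive limit is unchanged under passing to a cofinal subsystem. First I would unwind the equivalence hypothesis: there are constants $C,D>0$ with $\tau_1(x)\le C\tau_2(x)+D$ for all $x\in M$, and symmetrically $\tau_2(x)\le C'\tau_1(x)+D'$. Raising $e$ to these inequalities gives $\up_1(x)\le e^D\,\up_2(x)^C$ and $\up_2(x)\le e^{D'}\,\up_1(x)^{C'}$ pointwise on $M$.

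Next, fix $n\in\N$ and choose an integer $k\ge Cn$ (and $k\ge 1$). Then for every $x\in M$, using $\up_2(x)\ge 1$,
$$
\up_1(x)^n\le e^{Dn}\,\up_2(x)^{Cn}\le e^{Dn}\,\up_2(x)^{k}\,.
$$
Hence if $f\in\cO(M)$ satisfies $|f(x)|\le \text{const}\cdot\up_1(x)^n$ then also $|f(x)|\le \text{const}\cdot e^{Dn}\up_2(x)^{k}$, so $f\in\cO_{\up_2^k}(M)$ and $|f|_{\up_2^k}\le e^{Dn}|f|_{\up_1^n}$. Thus $\cO_{\up_1^n}(M)\hookrightarrow\cO_{\up_2^k}(M)$ is a well-defined bounded (hence continuous) linear inclusion. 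By the symmetric argument, for every $n$ there is an integer $k'\ge C'n$ with a continuous inclusion $\cO_{\up_2^n}(M)\hookrightarrow\cO_{\up_1^{k'}}(M)$. Therefore the two directed systems of Banach spaces are mutually cofinal and the connecting maps are compatible (they are all restrictions of the identity on $\cO(M)$).

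Finally I would invoke the standard fact that if two directed systems of locally convex spaces over $\N$ are mutually cofinal via continuous linear maps that commute with the structure maps, their inductive limits coincide as locally convex spaces; applied here, $\cO_{\up_1^\infty}(M)=\varinjlim_n\cO_{\up_1^n}(M)=\varinjlim_n\cO_{\up_2^n}(M)=\cO_{\up_2^\infty}(M)$, and the underlying sets are equal because every element of one system lies in some space of the other. The only point requiring a little care --- and the place I would be most careful --- is the interchange of the multiplicative constants with the exponents: one must use $\up_i\ge 1$ to absorb the additive constants $D,D'$ and the non-integrality of $Cn$ into a larger integer exponent, rather than expecting the seminorms themselves to be comparable for a fixed $n$; that is exactly why the statement is about the inductive limit and not about the individual Banach spaces.
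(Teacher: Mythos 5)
Your proposal is correct and follows essentially the same route as the paper: the paper's (much terser) proof likewise observes that the two systems $(\cO_{\up_1^n}(M))_n$ and $(\cO_{\up_2^n}(M))_n$ are mutually cofinal via continuous inclusions and concludes that the induced continuous maps between the inductive limits are mutually inverse. Your version merely makes explicit the constants ($\up_1^n\le e^{Dn}\up_2^{k}$ for $k\ge Cn$, using $\up_2\ge 1$), which is a useful elaboration but not a different argument.
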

\begin{proof}
For each $p\in \N$ there is $q\in \N$ s.t.
$\cO_{\up_1^q}(M)\subset \cO_{\up_2^p}(M)$ and this inclusion is a
 continuous linear map. Therefore we have a continuous linear map
%$$\varinjlim_{q\in \N} \cO_{\up_1^q}(M)=\varinjlim_{p\in\N}\cO_{\up_2^p}(G)\,.$$
$\cO_{\up_1^\infty}(M)\to\cO_{\up_2^\infty}(M)$ of inductive
limits. Similarly, we get a continuous linear map in the reverse
direction. Evidently, these maps are inverse to each other and
this completes the proof.
\end{proof}

\begin{proof}[Proof of Theorem~\ref{exptypdesc}]
Note that $G$ is compactly generated and fix a relatively compact
generating set~$U$. Proposition~\ref{decinli} implies that
$\cO_{exp}(G)= \cO_{\xi^\infty}(G)$, where $\xi(g)=e^{\ell_U(g)}$.
Evidently, $ \eta^r(g)\!:=e^{r\si(g)}$ ($r\in\R_+$). It follows
from Theorem~\ref{eelUsi} that $\si$  is equivalent to $\ell_U$;
therefore $\cO_{\xi^\infty}(M) = \cO_{\eta^\infty}(M)$ by
Lemma~\ref{eqindlim}. Obviously,
$$
\varinjlim_{r\in \R_+} \cO_{\eta^r}(G)=\varinjlim_{n\in\N}
\cO_{\eta^n}(G)\,,
$$
so the statement for $\si$ is proved.

Exactly the same argument is applied to $\bar \si$.
\end{proof}

\begin{rem}
In terms of the classical entire functions theory,
Theorem~\ref{exptypdesc} says, in particular, that $\cO_{exp}(G)$
consists of entire functions in $t_1,\ldots,t_m$ that are of at
most order $1/w_i$ and finite type w.r.t. $t_i$ for all
$i=1,\ldots,m$.
\end{rem}

\begin{exm} \cite[Sect.~5.4.2]{Ak08}
Let $\fg$ be an abelian  Lie algebra $\fg$ with basis
$e_1,\ldots,e_m$. Then $w_1=\cdots=w_m=1$ and
$\si(g)=\max\{|t_1|,\ldots,|t_m|\}$.  So $\cO_{exp}(G)$ coincides
with the set of entire functions  of exponential type (=of at most
order $1$ and finite type) on $\CC^m$ as it is defined in the
classical theory of several complex variables \cite[Def.~1.8]{LG}.
This example justifies the terminology.
\end{exm}

\begin{exm}
Consider the $3$-dimensional complex Heisenberg Lie algebra $\fg$ with basis
$e_1,e_2,e_2$  and relation $[e_1,e_2]=e_3$.  The lower central series  has
the form
$$
\fg=\fg_1 \supset\fg_2=\spn\{e_3\} \supset\fg_3=0\,.
$$
Then $w_1=w_2=1$, $w_3=2$, and
$\si(g)=\max\bigl\{|t_1|,\,|t_2|,\,|t_3|^{1/2}\bigr\}$.
\end{exm}

 \begin{exm}
 Let $\fg$ be the $7$-dimensional Lie algebra with
basis $e_1,\ldots ,e_7$ and commutation relations
\begin{gather*}
[e_1,e_i]=e_{i+1}\quad (i=2,\ldots ,6),\\
[e_2,e_3]=-e_6,\; [e_3,e_4]=e_7,\; [e_2,e_4]=[e_2,e_5]=-e_7\,,
\end{gather*}
the  undefined brackets being zero (see \cite{Favre} or
\cite[Ch.~2, Exm.~III.3(i)]{GoKh}).  Then
$$
w_1=w_2=1,\; w_3=2,\; w_4=3, \; w_5=4,\;w_6=5,\; w_7=6\,,
$$ and
$$
\si(g)=\max\bigl\{|t_1|,\,|t_2|,\,|t_3|^{1/2},\,|t_4|^{1/3},\,|t_5|^{1/4},\,|t_6|^{1/5},\,|t_7|^{1/6}\bigr\}\,.
$$

This algebra is exhibited  in \cite[Rem.~6.6]{Pir_stbflat} as an
example of a nilpotent Lie algebra that is not contractible and,
hence, is not positively graded. Thus, it does not satisfy
conditions of [ibid., Th.~6.19], which asserts that
$\io\!:U(\fg)\to \widehat U(\fg)$  is a   homological epimorphism
for any positively graded Lie algebra $\fg$. So our
Theorem~\ref{thmmail} is stronger that this Pirkovskii's result.
\end{exm}

\begin{rem}
Since we choose as $\mathscr{F}$  the lower central series
of~$\fg$, the dimensions of $\fg_1,\ldots,\fg_k$ are invariants of
a nilpotent Lie algebra~$\fg$. Therefore the sequence
$w_1,\ldots,w_m$ is also an invariant of~$\fg$. Of course, this
invariant (as well as the isomorphism class of the $\ptn$-algebra
$\cO_{exp}(G)$) is far from sufficient to distinguish  nilpotent
Lie algebras.  To do this one need the comultiplication  on
$\cO_{exp}(G)$ inherited from the multiplication on $U(\fg)$.
\end{rem}

\section{Proofs of  main results and an application}\label{PMR}

With Theorem~\ref{exptypdesc} under arms we can prove
Theorems~\ref{AMEdescrnil} and~\ref{thmmail}.

We need the following definition \cite[Def.~2.1]{Go79}: an
$\mathscr F$-weight sequence $\mathscr{M}$  is  \emph{entire} if
the following two conditions are satisfied
\begin{gather}
\label{entire}
\sum_\alpha M_\alpha r^{w(\alpha)} <\infty\quad\text{for all } r>0\,;\\
\sup_{\alpha,\beta\ne 0} \bigl\{ A^{w(\alpha)/w(\beta)}
M_\beta^{1/w(\beta)} M_\alpha^{-1/w(\alpha)}\bigr\} <\infty
\quad\text{for some } A>0\,.\notag
\end{gather}

\begin{proof}[Proof of Theorem~\ref{AMEdescrnil}]
Let $G$ be a simply connected nilpotent complex Lie group with Lie
algebra $\fg$. Given $z\in\CC$, define a linear map
$\delta_z\!:\fg\to\fg$ by $\delta_z(e_i)=z^{w_i}e_i$. We use the
same symbol $\delta_z$ to denote the corresponding holomorphic
endomorphism of $G$ satisfying $\delta_z\circ\exp=
\exp\circ\delta_z$.

Recall that our choice of an $\mathscr F$-weight sequence is
$\mathscr M_1$ defined by $ M_\al\!:=w(\al)^{-w(\al)}$. Consider
the growth  function $\Phi$ associated with $\mathscr{M}_1$ given
by
$$
\Phi(g)\!:=\sum_\alpha M_\alpha \si(g)^{w(\alpha)}=\sum_\alpha
\left(\frac{\si(g)}{w(\al)}\right)^{w(\alpha)}\,,
$$
where the function $\si$ is defined in~\eqref{sidef}.   (Since
$\mathscr{M}_1$ is entire \cite[Sect.~2, Exm.~1]{Go79}, the
function $\Phi$ is well defined.) Denote by
$\cO_{\mathscr{M}_1}(G)$ the linear subspace of $\cO(G)$ that
contains every function $f$ s.t. there are $C>0$ and $r>0$ and
$|f(g)|\le C\Phi(\de_r g)$ is satisfied for all $g\in G $. To
make $\cO_{\mathscr{M}_1}(G)$ a locally convex space consider,
for $r>0$,  the space
$$
\cO_{{\mathscr{M}_1},r}(G)\!:=\Bigl\{ f\in\cO(G) : N_r(f)\!:=\sup_{g\in
G}\Phi(\de_r g)^{-1}|f(g)|<\infty\Bigr\}\,.
$$
Evidently, $\cO_{{\mathscr{M}_1},r}(G)$ is a Banach space w.r.t.
the norm $N_r$. Since $\Phi(\delta_s g)\le \Phi(\delta_r g)$
whenever $0\le s\le r$, we have $\cO_{{\mathscr{M}_1},s}(G)\subset
\cO_{{\mathscr{M}_1},r}(G)$ for each $s\le r$, and $N_r(f)\le
N_s(f)$ for each $f\in \cO_{{\mathscr M}_1,s}(G)$. Therefore one
may identify $\cO_{\mathscr{M}_1}(G)$ with the locally convex
space $ \displaystyle\varinjlim_{s\in\R_+}
\cO_{{\mathscr{M}_1},s}(G)$.

On the other hand,  by Theorem~\ref{exptypdesc},
$\cO_{exp}(G)\cong \displaystyle{\varinjlim_{r\in\R_+}
\cO_{\eta^r}(G)} $, where $\eta(g)\!:=e^{\si(g)}$. Goodman noted
in [ibid., (2.6)] that there are positive constants $c,a,C,A$ s.t.
$$
ce^{a\si(g)}\le \Phi(g) \le Ce^{A\si(g)}\qquad(g\in G)\,.
$$
Applying the same argument as in the proof of Lemma~\ref{eqindlim}
to the inductive limits $ \displaystyle\varinjlim_{s\in\R_+}
\cO_{{\mathscr{M}_1},s}(G)$ and
$\displaystyle{\varinjlim_{r\in\R_+} \cO_{\eta^r}(G)}$ we have
from this observation that $\cO_{\mathscr{M}_1}(G)=\cO_{exp}(G)$
as a set and as a locally convex space.

It is shown in [ibid., Th.~3.1] that  $U(\fg)_{\mathscr{M}_1}$ is
the strong  dual  space of $\cO_{\mathscr{M}_1}(G)$ via the
pairing that extends the pairing between $U(\fg)$ and ${\mathscr
P}(G)$ defined in~\eqref{ufPp}\footnote{Note that to prove this
result Goodman used an alternative definition of
$U(\fg)_{\mathscr{M}}$ for a $\mathscr F$-weight sequence. In
contrast to \eqref{UfgMde}, which introduced in~\cite{Go78}, he
considered in~\cite{Go79} series in symmetrization of the
PBW-basis $(e^\al\,:\al\in\Z_+^m)$. Nevertheless, it follows from
\cite[Lem,~6.2 and Rem.~(1) on p.~203]{Go78}  that the approaches
are equivalent.}. On the other hand, by Corollary~\ref{paUwhOe},
$\widehat{U}(\fg)$ is the strong  dual  space of  $\cO_{exp}(G)$
via the same paring. Thus $U(\fg)_{\mathscr{M}_1}$ and
$\widehat{U}(\fg)$  are isomorphic as $\ptn$-algebras, and
$U(\fg)\to U(\fg)_{\mathscr{M}_1}$ is an Arens-Michael envelope.
\end{proof}

\begin{proof}[Proof of Theorem~\ref{thmmail}]
Since $\mathscr{M}_1$ is entire \cite[Sect.~2, Exm.~1]{Go79}, we
can refer to Pirkovskii's Theorem~\ref{PirkThUM}, which asserts
that, in this case, the natural map  $U(\fg)\to U(\fg)_{\mathscr
M_1}$ is a   homological epimorphism.  Thus, the assertion
follows Theorem~\ref{AMEdescrnil}.
\end{proof}

As an application of Theorem~\ref{AMEdescrnil} we obtain an
estimation of a submultiplicative norm for powers of elements in
a nilpotent complex Lie algebra~$\fg$. Let $ \mathscr{F}$ denote,
as usual, the lower central series.   Given $X\in\fg$ s.t. $X\ne
0$, set $w(X)\!:=\max\{ j: X\in\fg_j\}$.

If $\|\cdot\|$ is a submultiplicative prenorm on $U(\fg)$, then
denote by $A$ the completion of $U(\fg)$ w.r.t. $\|\cdot\|$ and
by $\la\!:\fg\to A$ the corresponding Lie algebra homomorphism.
It is easy to see from the spectral properties of  Banach
algebras that, for any $X\in [\fg,\fg]$, the element $\la(X)$  is
topologically nilpotent, i.e., $\|\la(X)^n\|^{1/n}=o(1)$. (In
fact, $\la([\fg,\fg])$ is contained in the Jacobson radical of
$A$~\cite{Tu84}.)   Theorem~\ref{AMEdescrnil} gives us  the
following decay  estimation.

\begin{pr}
Let $A$ be a unital Banach algebra with a submultiplicative norm
$\|\cdot\|$, let $\fg$ be a nilpotent complex Lie algebra with
lower central series $\mathscr{F}$ as a positive filtration, and
let $\la\!:\fg\to A$ be a Lie algebra homomorphism. Then for each
$X\in\fg\setminus\{0\}$,
$$
\|\la(X)^n\|^{1/n}=O\biggl(\frac{1}{n^{w(X)-1}}\biggr)\qquad
(n\in \N)\,.
$$
\end{pr}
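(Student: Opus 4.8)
The plan is to estimate $\|\la(X)^n\|$ using the identification of $\widehat U(\fg)$ with $U(\fg)_{\mathscr M_1}$ provided by Theorem~\ref{AMEdescrnil}. First I would observe that, since $A$ is a Banach algebra, the Lie algebra homomorphism $\la$ extends to a continuous unital algebra homomorphism $U(\fg)\to A$ and hence, by the universal property of the Arens-Michael envelope, factors through a continuous homomorphism $\widehat\la\!:\widehat U(\fg)\to A$. By Theorem~\ref{AMEdescrnil} we may identify $\widehat U(\fg)$ with $U(\fg)_{\mathscr M_1}$, whose topology is given by the prenorms $\|\cdot\|_r$ from \eqref{UfgMde} with $M_\al=w(\al)^{-w(\al)}$. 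Continuity of $\widehat\la$ then yields a constant $C>0$ and an $r>0$ with $\|\widehat\la(x)\|\le C\,\|x\|_r$ for all $x\in U(\fg)_{\mathscr M_1}$.

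Next I would apply this estimate to $x=X^n$. After choosing the $\mathscr F$-basis so that $X=e_{i_0}$ is a basis vector with $w_{i_0}=w(X)$ (one may always complete $X$ to an $\mathscr F$-basis, since the $e_i$ with $w_i\ge j$ span $\fg_j$ and $X\in\fg_{w(X)}\setminus\fg_{w(X)+1}$), the element $X^n=e^{\al}$ where $\al=n\,\mathbf{e}_{i_0}$ is a single PBW-monomial. Then $\al!=n!$, $w(\al)=n\,w(X)$, and $M_\al=(n\,w(X))^{-n\,w(X)}$, so
$$
\|X^n\|_r=n!\,\bigl(n\,w(X)\bigr)^{-n\,w(X)}\,r^{n\,w(X)}\,.
$$
Hence $\|\la(X)^n\|\le C\,n!\,\bigl(n\,w(X)\bigr)^{-n\,w(X)}\,r^{n\,w(X)}$, and taking $n$-th roots,
$$
\|\la(X)^n\|^{1/n}\le C^{1/n}\,(n!)^{1/n}\,\bigl(n\,w(X)\bigr)^{-w(X)}\,r^{w(X)}\,.
$$

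Finally I would estimate the right-hand side asymptotically. By Stirling, $(n!)^{1/n}=O(n)$ (more precisely $(n!)^{1/n}\sim n/e$), while $C^{1/n}\to 1$ and $\bigl(n\,w(X)\bigr)^{-w(X)}=w(X)^{-w(X)}\,n^{-w(X)}$; multiplying these together gives a bound of order $n\cdot n^{-w(X)}=n^{-(w(X)-1)}$, which is exactly the claimed $O\bigl(n^{-(w(X)-1)}\bigr)$. This completes the proof. No step here is genuinely difficult once Theorem~\ref{AMEdescrnil} is in hand; the only point requiring a small remark is the freedom to take $X$ as a member of an $\mathscr F$-basis, which is legitimate because the estimate $\|\widehat\la(x)\|\le C\|x\|_r$ holds for every choice of $\mathscr F$-basis (the family of prenorms $\|\cdot\|_r$ depends on the basis, but each such family determines the same topology on $\widehat U(\fg)$, so continuity of $\widehat\la$ supplies an appropriate $C$ and $r$ for the basis we prefer).
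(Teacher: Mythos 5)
Your proposal is correct and follows essentially the same route as the paper's own proof: extend $\la$ to $U(\fg)\to A$, use Theorem~\ref{AMEdescrnil} to get continuity with respect to the prenorms $\|\cdot\|_r$, complete $X$ to an $\mathscr F$-basis (justified exactly as you do, via $\fg_j=\spn\{e_i:w_i\ge j\}$), compute $\|X^n\|_r=n!\,(r/(nw(X)))^{nw(X)}$, and finish with Stirling. Your closing remark on the basis-dependence of the prenorms is a sensible explicit acknowledgement of a point the paper leaves implicit; otherwise the two arguments coincide.
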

Note that in the case when $w(X)=1$ we have the trivial assertion
that $\|\la(X)^n\|^{1/n}$ is bounded but for $w(X)>1$ the
statement is more interesting.

\begin{proof}
Consider the system of prenorms $(\|\cdot\|_r;\, r>0)$ on
$U(\fg)$ from Theorem~\ref{AMEdescrnil}. By the universal
property, on can extend $\la$ to a homomorphism $U(\fg)\to A$.
Then $\|\la(\cdot)\|$ is a submultiplicative prenorm on $U(\fg)$.
It follows from Theorem~\ref{AMEdescrnil} that $\|\la(\cdot)\|$
is continuous w.r.t. $(\|\cdot\|_r;\, r>0)$. Thus it is
sufficient to show  that $\|X^n\|_r^{1/n}=O(n^{1-w(X)})$  for
every $X\ne0$ and every $r>0$.

Fix $X\in\fg\setminus\{0\}$. We claim that there is  an
$\mathscr{F}$-basis  $(e_i)$ in $\fg$ s.t. $X=e_i$ for some~$i$.
Indeed, let $j\!:=w(X)$. Then $X\in \fg_j$ and $X\not\in
\fg_{j+1}$. Consider an arbitrary $\mathscr{F}$-basis  $(e_i)$ in
$\fg$. Since       $\fg_j = \spn\{e_i : w_i \ge j\}$, we have
$X=\sum_{w_i \ge j}c_ie_i$ for some $c_i\in\CC$ s.t. there is $i$
with $w_i = j$ and $c_i\ne 0$. So we can replace $e_i$ by $X$ and
get an $\mathscr{F}$-basis again.

If $\al\!:=(0,\ldots,n,\ldots)$, where $n$ is on the $i$th place, then
$w(\alpha)=nw_i$ and $\al!=n!$. Therefore,
$$
\|e_i^n\|_r=n!\left(\frac{r}{nw_i}\right)^{nw_i}\,.
$$
Hence, $\|e_i^n\|^{1/n}=O(n^{1-w_i})$. Finally, remind that
$e_i=X$ with $w_i=w(X)$.
\end{proof}

\section{Appendix. Proof of Theorem~\ref{eelUsi} and relations with Riemannian distances}\label{AppC1}

The case of Theorem~\ref{eelUsi} when $\fg$ is complex  is easily
reduced for the real case, so below  we suppose that $\fg$ is a
nilpotent real Lie algebra.

The argument consists of three parts and we need three lemmas.
Both statements of the first lemma are partial cases of
\cite[Lem.~IV.5.1]{VSC92}.
\begin{lm}\label{fromV}
\emph{(A)} For each $n\in\N$  there are $N\in\N$, $s_1,\ldots,
s_{N}\in \{1,\ldots,n\}$, and $\al_1,\ldots, \al_{N}\in \R$ s.t.
for arbitrary $Y_1,\ldots, Y_n\in\fg$
\begin{equation}\label{exppro}
\exp\Bigl(\sum_{s=1}^n Y_s\Bigr)=\prod_{r=1}^N \exp(\al_r
Y_{s_r})\,.
\end{equation}

\emph{(B)} For each $n\in\N$ and each word $U=(u_1,\ldots,u_j)$
in $\{1,\ldots,n\}$ there are $N'\in\N$, $s'_1,\ldots, s'_{N'}\in
\{1,\ldots,n\}$, and $\al'_1,\ldots, \al'_{N'}\in \R$ s.t. for
arbitrary $Y_1,\ldots, Y_n\in\fg$
\begin{equation}\label{exppro2}
\exp(Y_U)=\prod_{p=1}^{N'} \exp(\al'_p Y_{s'_p})\,,
\end{equation}
where  $Y_U\!:=[Y_{u_1},[Y_{u_2},\cdots, Y_{u_j}]\cdots]$.
\end{lm}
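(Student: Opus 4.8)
The plan is to deduce both parts from a single statement about the free nilpotent Lie group; the whole point of this reduction is that it forces the data $N,(s_r),(\alpha_r)$ to depend only on $n$ and on the nilpotency class of $\fg$, and not on the particular vectors $Y_1,\dots,Y_n$. Let $k$ be the nilpotency class of $\fg$, let $\mathfrak f$ be the free nilpotent Lie algebra of class $k$ on generators $x_1,\dots,x_n$ (a finite-dimensional nilpotent Lie algebra), and let $F$ be the associated simply connected Lie group, so that $\exp\colon\mathfrak f\to F$ is a diffeomorphism. Any tuple $Y_1,\dots,Y_n\in\fg$ determines a unique Lie algebra homomorphism $\phi\colon\mathfrak f\to\fg$ with $\phi(x_i)=Y_i$, and, since $F$ is simply connected, $\phi$ lifts to a Lie group homomorphism $\Phi\colon F\to G$ with $\Phi\circ\exp=\exp\circ\phi$, where $G$ denotes the simply connected Lie group with Lie algebra $\fg$. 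As $\phi$ is linear and preserves brackets, it carries $\sum_s x_s$ to $\sum_s Y_s$ and $x_U:=[x_{u_1},[x_{u_2},\cdots,x_{u_j}]\cdots]$ to $Y_U$. Hence it is enough to write $\exp(\sum_s x_s)$ and $\exp(x_U)$ in $F$ as finite products $\prod_r\exp(\alpha_r x_{s_r})$ with $s_r\in\{1,\dots,n\}$ and $\alpha_r\in\R$; applying $\Phi$ then yields \eqref{exppro} and \eqref{exppro2} with the same $N$, $s_r$, $\alpha_r$.

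It thus remains to prove that every element of $F$ — in particular $\exp(\sum_s x_s)$, and $\exp(x_U)$ when $j\le k$ (if $j>k$ then $x_U=0$ and the claim is vacuous) — is a finite product of elements $\exp(tx_i)$ with $t\in\R$ and $1\le i\le n$. The subgroup $H$ of $F$ generated by the one-parameter subgroups $\{\exp(tx_i):t\in\R\}$ is path-connected, hence an immersed Lie subgroup by Yamabe's theorem; its Lie algebra contains every $x_i$, hence contains $\langle x_1,\dots,x_n\rangle_{\mathrm{Lie}}=\mathfrak f$, and therefore $H=F$ because $F$ is connected. This finishes the argument. In the nilpotent setting one can alternatively avoid Yamabe and induct on $k$, pushing the product representation down to $F/\exp(\mathfrak f_k)$, lifting it, and resolving the residual central factor by commutator identities, but the soft argument above is shorter.

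The main — indeed essentially the only — obstacle is the uniformity of $N$, $(s_r)$, $(\alpha_r)$ with respect to the $Y_i$, and this is settled once and for all by the passage to the free Lie algebra $\mathfrak f$; after that, the fact that $F$ is generated by the one-parameter subgroups of its free generators is classical and is the clean core of the proof. One could instead proceed by a fully explicit computation with the Baker--Campbell--Hausdorff and Hall--Petrescu commutator identities — for part (B), expressing $\exp(Y_U)$ for a word $U=(u_1,\dots,u_j)$ as a group commutator of $\exp(\pm Y_{u_1})$ and $\exp(\pm Y_{U'})$ with $U'=(u_2,\dots,u_j)$, modulo corrections that are products of exponentials of strictly deeper brackets, and then deriving (A) by solving $\mathrm{BCH}(Z,W)=\sum_s Y_s$ for $W$, where $Z:=\log\bigl(\prod_s\exp(Y_s)\bigr)$, and expanding the exponentials in $\exp(Z)\exp(W)=\exp(\sum_s Y_s)$ via (B). This works but requires careful bookkeeping of depths in the lower central series, so the free-group reduction is the route I would take.
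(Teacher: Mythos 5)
Your proof is correct, but note that the paper does not actually prove this lemma: it only remarks that both parts are special cases of \cite[Lem.~IV.5.1]{VSC92} and gives no argument. What you supply --- passing to the free nilpotent Lie algebra $\mathfrak{f}$ of class $k$ on $n$ generators so that the data $N$, $(s_r)$, $(\al_r)$ are forced to depend only on $n$ and $k$ and not on the $Y_i$, and then observing that the simply connected group $F$ is generated by the one-parameter subgroups $\exp(\R x_i)$ --- is essentially the standard proof of the cited lemma, so your write-up fills in exactly what the paper leaves to the reference; the reduction via the integrated homomorphism $\Phi$ with $\Phi\circ\exp=\exp\circ\phi$ is the right mechanism and is where the uniformity, which you correctly identify as the only real content, is secured. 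Two minor points: invoking Yamabe's theorem is heavier than needed, since the classical fact that the integral subgroup generated by one-parameter subgroups is the connected subgroup attached to the Lie subalgebra they generate already yields $H=F$ (and your sketched induction on the nilpotency class is the more elementary route closer in spirit to \cite{VSC92}); and in part (B) the case $j>k$ is not vacuous but trivial, since $Y_U=0$ and one may take $N'=1$, $\al'_1=0$. Neither point affects correctness.
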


\begin{lm}\label{sisubpo}
There are $C,D\ge 0$ s.t.
\begin{equation}\label{sig12}
\si(g_1 g_2)\le C\max\{\si(g_1),\,\si(g_2)\}+D\qquad(g_1,g_2\in G)
\end{equation}
i.e., $e^\si$ is 'sub-polynomial' in the terminology of
\cite[(1.3.1)]{Sch93}.
\end{lm}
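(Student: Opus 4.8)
The plan is to work in the canonical coordinates of the first kind and combine the Campbell--Hausdorff formula with the filtration estimate $[\fg_a,\fg_b]\subseteq\fg_{a+b}$. Write $g_1=\exp X$ and $g_2=\exp Y$ with $X=\sum_i x_ie_i$ and $Y=\sum_i y_ie_i$. Since $G$ is simply connected nilpotent, $\exp\colon\fg\to G$ is a (global, analytic) diffeomorphism and $g_1g_2=\exp H(X,Y)$, where $H(X,Y)=X+Y+\tfrac12[X,Y]+\cdots$ is the Campbell--Hausdorff series. Because $\fg$ is nilpotent, all iterated brackets of length exceeding the nilpotency class vanish, so $H$ is a \emph{finite} $\Q$-linear combination of iterated brackets of $X$ and $Y$; consequently each coordinate of $g_1g_2$ is a polynomial in $(x_1,\ldots,x_m,y_1,\ldots,y_m)$.

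The key point is a weight bookkeeping. An iterated bracket $[e_{i_1},[e_{i_2},\ldots,e_{i_p}]\cdots]$ lies in $\fg_{w_{i_1}+\cdots+w_{i_p}}$, since $e_i\in\fg_{w_i}$ and $[\fg_a,\fg_b]\subseteq\fg_{a+b}$; as $\fg_W=\spn\{e_k:w_k\ge W\}$, such a bracket has a nonzero $e_k$-component only when $w_k\ge w_{i_1}+\cdots+w_{i_p}$. Expanding $X$ and $Y$ in the basis $(e_i)$ inside $H(X,Y)$ and collecting the coefficient of $e_k$, we conclude that the $k$-th coordinate $z_k$ of $g_1g_2$ is a polynomial each of whose monomials $x_{i_1}\cdots x_{i_a}\,y_{j_1}\cdots y_{j_b}$ (indices not necessarily distinct) satisfies $w_{i_1}+\cdots+w_{i_a}+w_{j_1}+\cdots+w_{j_b}\le w_k$; the linear part $x_k+y_k$ trivially respects this. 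Let $P_k$ be the sum of the absolute values of the finitely many coefficients of this polynomial and put $C:=\max_k\max\{1,P_k\}<\infty$.

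Now set $t:=\max\{\si(g_1),\si(g_2)\}+1\ge1$. By the definition of $\si$ we have $|x_i|\le\si(g_1)^{w_i}\le t^{w_i}$ and likewise $|y_i|\le t^{w_i}$; since $t\ge1$, every monomial as above is then bounded in absolute value by $t^{w_k}$. Hence $|z_k|\le P_kt^{w_k}$, so (recalling $w_k\ge1$) $|z_k|^{1/w_k}\le P_k^{1/w_k}t\le\max\{1,P_k\}\,t\le Ct$ for every $k$; taking the maximum over $k$ gives $\si(g_1g_2)\le Ct=C\max\{\si(g_1),\si(g_2)\}+C$, which is \eqref{sig12} with $D:=C$.

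I expect the weight bookkeeping in the Campbell--Hausdorff expansion --- that every monomial surviving in $z_k$ has total weight at most $w_k$ --- to be the only genuinely delicate point; once it is in place, the rest is the elementary substitution $|x_i|,|y_i|\le t^{w_i}$ followed by taking $w_k$-th roots. (An alternative to the shift $t=\max\{\si(g_1),\si(g_2)\}+1$ is to dispose of the case $\max\{\si(g_1),\si(g_2)\}\ge1$ by the homogeneity estimate above and the case $\max\{\si(g_1),\si(g_2)\}<1$ by noting that then $g_1,g_2$ lie in the compact set $\{g:\si(g)\le1\}=\exp\{\sum_i t_ie_i:|t_i|\le1\ \forall i\}$, on which the continuous map $(g_1,g_2)\mapsto\si(g_1g_2)$ is bounded by some $D$.)
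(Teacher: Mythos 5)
Your proof is correct and follows essentially the same route as the paper: both apply the Baker--Campbell--Hausdorff formula and use $[\fg_a,\fg_b]\subseteq\fg_{a+b}$ together with $\fg_j=\spn\{e_k: w_k\ge j\}$ to conclude that every monomial contributing to the $k$-th coordinate of $g_1g_2$ has total weight at most $w_k$. The only (immaterial) difference is the closing elementary estimate: the paper bounds $|t_{u_1}\cdots t_{u_s}|^{1/w_p}$ by $\sum_i|t_{u_i}|^{1/w_{\bar u_i}}+1$ via a weighted geometric-mean inequality, whereas you substitute $|x_i|,|y_i|\le t^{w_i}$ with $t=\max\{\si(g_1),\si(g_2)\}+1$ and take $w_k$-th roots.
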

\begin{proof}
Let $k$  be the positive integer s.t. $\fg_k\ne 0$ and $\fg_{k+1}=
0$. For $X,Y\in\fg$ denote by $X\ast Y$ the Hausdorff product,
i.e., $\exp(X\ast Y)=\exp X\exp Y$. It follows from the
Baker-Campbell-Hausdorff formula that, for each  word $U$ in
$\{1,\ldots,2m\}$ of length at most $k$, there is $\be_U\in \R$
s.t.
\begin{equation}\label{YmY2m}
\Bigl(\sum_{i=1}^m Y_i\Bigr)\ast \Bigl(\sum_{i=m+1}^{2m}
Y_i\Bigr)=\sum_U \be_U Y_U
\end{equation}
for every $Y_1,\ldots,Y_{2m}\in \fg$.

Write $g_1=\exp(\sum_{i=1}^m t_ie_i)$ and $g_2=\exp(\sum_{i=1}^m
t_{m+i}e_i)$. Substituting in \eqref{YmY2m}, we obtain
$$
\Bigl(\sum_{i=1}^m t_ie_i\Bigr)\ast \Bigl(\sum_{i=1}^m
t_{m+i}e_i\Bigr)=\sum_U \be_U t_{u_1}\cdots t_{u_s}E_U\,,
$$
where $E_U=[e_{u_1},[e_{u_2},\cdots, e_{u_s}]\cdots]$. Denote
$u_i$ modulo~$n$ by~$\bar u_i$. Then $E_U\in \fg_{j(U)}$, where
$j(U)\!:=\sum_{i=1}^s w_{\bar u_i}$. So we have
$$
E_U=\sum_{w_p\ge j(U)} \al_{U,p} e_p
$$
for some $\al_{U,p}$.

Now write $g_1g_2=\exp(\sum_{i=1}^m t'_ie_i)$. Then the
coefficient $t'_p$ is bounded by
$$
\sum_{U}|\be_U|\,|\al_{U,p}|\,|t_{u_1}\cdots t_{u_s}|\,.
$$
Note that $w_p\ge j(U)$ implies
$$
|t_{u_1}\cdots t_{u_s}|^{1/w_p}\le |t_{u_1}\cdots
t_{u_s}|^{1/j(U)}+1\le \sum_{i=1}^s|t_{u_i}|^{1/w_{\bar
u_i}}+1\,.
$$
Hence there exist $C'$ and $D'$ s.t.
$$
\sum_{p=1}^m|t'_p|^{1/w_p}\le
C'\left(\sum_{i=1}^m|t_i|^{1/w_i}+\sum_{i=1}^m|t_{m+i}|^{1/w_i}\right)+D'\,,
$$
which implies \eqref{sig12}.
\end{proof}

Fix subspaces $\fv_1,\ldots,\fv_k$ s.t.
$\fv_i\oplus\fg_{i+1}=\fg_i$. Evidently,
$\fg=\oplus_{i=1}^k\fv_i$.  It can easily be checked that this
decomposition can assumed compatible with an $\mathscr{F}$-basis.
Set $\fv^{(1)}\!:=\fv_{1}$ and
$\fv^{(j)}\!:=[\fv_{1},\fv^{(j-1)}]$ for $j>1$. It is not hard to
see that
\begin{equation}\label{fgjk}
\fg_j=\fv^{(j)}+\cdots+ \fv^{(k)}\,.
\end{equation}

\begin{lm}\label{normtrick}
For any $C>0$ there is a norm $\|\cdot\|$ on $\fg$ (as a real
linear space) s.t. the following conditions are satisfied.
\begin{enumerate}
\item If $V=\sum_s V_s$, where $V_s\in \fv_s$, then $\|V_s\|\le
\|V\|$ for all $s$.

\item For any $p\in\N$, $u_1,\ldots,u_p\in \{1,\ldots,k\}$, and $V_{u_s}\in \fv_{u_s}$
$(s=1,\ldots,p)$, one has
$$
\|\,[V_{u_1},[V_{u_2},\cdots, V_{u_p}]\cdots]\,\|\le C
\|V_{u_1}\|\,\|V_{u_2}\|\cdots \|V_{u_p}\|\,.
$$
\end{enumerate}
\end{lm}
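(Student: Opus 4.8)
The plan is to build the desired norm by first fixing a reference norm on $\fg$ adapted to the decomposition $\fg=\oplus_{s=1}^k\fv_s$, and then rescaling the summands $\fv_s$ by a common geometric factor $\lambda>0$ to be chosen at the end. Concretely, I would start from any norm $\|\cdot\|_0$ on $\fg$ with the property that the projections onto the $\fv_s$ are all of norm $\le 1$; the simplest choice is $\|V\|_0:=\max_s\|V_s\|_{0,s}$ for arbitrary fixed norms $\|\cdot\|_{0,s}$ on the $\fv_s$, which makes condition~(1) automatic for \emph{any} positive multiple of $\|\cdot\|_0$. Then for a scalar $\lambda>0$ I set $\|V\|:=\lambda\|V\|_0$. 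Condition~(1) is unaffected by the scaling, so the whole game is to arrange condition~(2).

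For condition~(2), the key observation is that the bracket is an alternating multilinear map, so iterated brackets $[V_{u_1},[V_{u_2},\cdots,V_{u_p}]\cdots]$ with $V_{u_s}\in\fv_{u_s}$ are bounded, for the fixed norm $\|\cdot\|_0$, by a constant depending only on the number of factors $p$: there is $K_p\ge 0$ with
$$
\|[V_{u_1},[V_{u_2},\cdots,V_{u_p}]\cdots]\|_0\le K_p\,\|V_{u_1}\|_0\cdots\|V_{u_p}\|_0
$$
for all choices of $u_s$ and $V_{u_s}$. Crucially, since $\fg$ is nilpotent of class $k$ (the integer with $\fg_k\ne0$, $\fg_{k+1}=0$), every iterated bracket of length $p>k$ vanishes, so only finitely many $K_p$ (those with $2\le p\le k$) matter; put $K:=\max_{2\le p\le k}K_p$. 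Now if $\|\cdot\|=\lambda\|\cdot\|_0$, the left side of~(2) is at most $\lambda K_p\,\|V_{u_1}\|_0\cdots\|V_{u_p}\|_0=\lambda K_p\,\lambda^{-p}\|V_{u_1}\|\cdots\|V_{u_p}\|$, so it suffices to have $\lambda^{1-p}K_p\le C$ for all $2\le p\le k$, i.e.\ $\lambda^{p-1}\ge K_p/C$. Choosing $\lambda:=\max\{1,\,(K/C)\}$ (or more cautiously $\lambda:=\max\{1,(K/C)^{1/(k-1)}\cdot K/C\}$, any $\lambda\ge\max(1,K/C)$ works since then $\lambda^{p-1}\ge\lambda\ge K/C\ge K_p/C$ for $p\ge2$) gives condition~(2) for every admissible $p$ simultaneously, while condition~(1) holds for free.

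I do not expect a serious obstacle here: the only point requiring a moment's care is the finiteness of the family of constants $\{K_p\}$, which is exactly where nilpotency enters (without it one would need to control brackets of unbounded length and no single $\lambda$ would suffice), and the elementary bookkeeping that a single scaling factor $\lambda$ handles all lengths $p$ at once because the required inequality $\lambda^{p-1}\ge K_p/C$ only gets easier as $p$ grows once $\lambda\ge\max(1,K/C)$. Verifying that the bracket estimate with a length-dependent constant holds for the fixed auxiliary norm is routine multilinear algebra (bound each single bracket $\fv_i\times\fv_j\to\fg$ by its operator norm and compose), so I would state it without grinding through indices.
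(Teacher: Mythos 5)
Your proof is correct and follows essentially the same route as the paper: fix a reference norm adapted to the decomposition $\fg=\oplus_{s}\fv_s$ (so that condition (1) is automatic) and rescale so that the surplus factor $\lambda^{p-1}$ coming from the product of $p\ge 2$ norms absorbs the universal multilinear constants, with nilpotency (vanishing of brackets of length $>k$) guaranteeing that only finitely many such constants must be beaten. The only cosmetic difference is that you use a single uniform scalar $\lambda$ on a max-norm where the paper takes a weighted sum $\sum_s \la_s\|V_s\|_s$ and invokes an induction on $p$; your version is, if anything, more explicit.
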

\begin{proof}
Fix a norm $\|\cdot\|_s$ on each $\fv_s$ and consider norms on
$\fg$ of the form $\|\sum_s V_s\|=\sum \la_s\|V_s\|_s$
$(\la_s>0)$. Obviously, any such norm satisfies to~(1).
Proceeding by induction on $p$ it is not hard to show that there
is a norm of this form that satisfies to~(2).
\end{proof}

\begin{proof}[Proof of Theorem~\ref{eelUsi} (the real case)]
Let $\ell$ and $\ell'$ be the word length functions corresponding
to the generating sets
$$
\bigcup_{i=1}^m\{\exp( t_ie_i)\!:\,|t_i|\le 1\}\quad\text{and}\quad\{\exp(X)\!:\, X\in\fv_1,\,\|X\|\le 1\}\,,
$$
resp. The first set is generating, since for each $i$ the linear
span of $\{e_i,\ldots,e_m\}$ is a subalgebra of $\fg$. To see that
the second set is generating one can apply \eqref{fgjk} with
$j=1$, the surjectivity of the exponential map, and both parts of
Lemma~\ref{fromV}. Since  $G$ is compactly generated, all word
length functions are equivalent \cite[Th.~1.1.21]{Sch93} (cf. also
Corollary~\ref{eUdl} below). Thus, it suffices to show that
$\ell\lesssim \bar\si \lesssim \si \lesssim\ell'$, where
$\lesssim$ means ''is dominated by''.

(1) First, we prove that $\ell\lesssim \bar\si$. Denote by $S$
the subset of indices $p$ s.t. $w_p=1$ (eq., $e_p\in\fv_1$). For
any word $U=(u_1,\ldots,u_j)$ in $S$ set $\la(U)=j$ and consider
the $j$th commutator $E_U(t)\!:=[te_{u_1},[te_{u_2},\cdots,
te_{u_j}]\cdots]$ for $t\in\R$.

Fix $j\in\{1,\ldots,k\}$ and $e_i$ with $w_i=j$. Since $e_i\in
\fg_j$, it follows from \eqref{fgjk} that $e_i$ is a linear
combination $\sum_U\mu_U E_U(1)$, where $U$ runs all words in $S$
with $j\le \la(U)\le k$. Therefore, for any $t\in\R$,
\begin{equation}\label{teinuU}
te_i=\sgn(t)\, \sum_{j\le \la(U)\le k}\mu_U
E_U\bigl(|t|^{1/\la(U)}\bigr) \,.
\end{equation}
For simplicity, we assume that $t>0$; for negative $t$ the
following estimates are the same.

Enumerating all words in the sum above as $U_1,\ldots,U_n$ and
applying Part~(A) of Lemma~\ref{fromV} for $n$, we can substitute
$\sum\mu_U E_U(|t|^{1/\la(U)})$ in \eqref{exppro} and get from
\eqref{teinuU} the equality
\begin{equation*}
\exp(te_i)=\prod_{r=1}^N \exp\bigl(\al_r
\mu_{U_r}E_{U_r}\bigl(|t|^{1/\la(U_r)}\bigr)\bigr)\,.
\end{equation*}
Further, applying Part~(B) of Lemma~\ref{fromV} to each factor in
the product with $Y_1=\al_r\mu_{U_r} |t|^{1/\la(U_r)} e_1$ and
$Y_s=|t|^{1/\la(U_r)} e_s$ for $s\ge 2$, we have  that there are
$N''$ and $\be_1,\ldots,\be_{N''}$ independent in $t$ with some
positive integers $\la_1,\ldots,\la_{N''}$ non less that~$j$ s.t.
\begin{equation*}
\exp(te_i)=\prod_{p=1}^{N''} \exp\bigl(|t|^{1/\la_p} \be_p
e_{i_p}\bigr)\,.
\end{equation*}
Hence, for each $t\ge 0$,
$$
\ell(\exp te_i)\le \sum_p \ell\bigl(\exp\bigr(|t|^{1/\la_p} \be_p
e_{i_p}\bigr)\bigr)\le \sum_p (|t|^{1/\la_p} \be_p+1) \,.
$$
Since $|t|^{1/\la_p}\le |t|^{1/j}$ for $|t|\ge 1$, we obtain
$\ell(\exp(te_i))\le C_j |t|^{1/j}+D_j$ for all $t$, where $C_j$
and $D_j$ depend only on $j$.

Finally, write any $g\in G$ as
$$
g=\prod_{i=1}^m \exp(\bar t_i\,e_i)\,,
$$
where $\bar{t}_1,\ldots, \bar{t}_ m\in \R$. Therefore
$$
\ell(g)\le \sum_i \ell(\exp(\bar t_i\,e_i))\le \sum_i C_{w_i}
|\bar t_i|^{1/{w_i}}+D_{w_i}\,.
$$
Thus, $\ell$ is dominated by $\bar\si$.

(2) Secondly, we prove that $\bar\si\lesssim\si$ (cf. the proof
of \cite[II.4.17]{DER}). We show by induction on $i$ in the
reverse order that for each $i=1,\ldots,m$ there are constants
$A_i$ and $B_i$ s.t. for every $g=\exp(\sum_{s=i}^m t_se_s)$ the
estimate $\bar\si(g)\le A_i \si(g)+B_i$ holds.

Obviously, if $i=m$, then $\bar\si(g)=\si(g)$. Suppose that the
induction assumption is satisfied for $i$.  The
Baker-Campbell-Hausdorff  formula implies that
\begin{equation}\label{BCHfi1}
 (-t_{i-1}e_{i-1})\ast\Bigl(\sum_{s=i-1}^m
t_se_s\Bigr)=\sum_{s=i}^m t'_se_s
\end{equation}
for some $t'_s$. Write $\exp(\sum_{s=i}^m t'_se_s)=\prod_{s=i}^m
\exp(\bar t_s\,e_s)$. Then
$$
\exp\Bigl(\sum_{s=i-1}^m t_se_s\Bigr)=\prod_{s=i-1}^m \exp(\bar
t_s\,e_s)\,,
$$
where $\bar t_{i-1}\!:=t_{i-1}$. Applying Lemma~\ref{sisubpo} to
\eqref{BCHfi1}, we get
$$
\max_{i\le s\le m}|t'_{s}|^{{1/w_s}}\le C\max_{i-1\le s\le
m}|t_{s}|^{{1/w_s}} +D\,.
$$
By the inductive assumption, we have
$$
\max_{i\le s\le m}|\bar{t}_{s}|^{{1/w_s}}\le A_i\max_{i\le s\le
m}|t'_{s}|^{{1/w_s}}+B_i\,.
$$
Combining the two estimates we obtain
$$
\max_{i-1\le s\le m}|\bar{t}_{s}|^{{1/w_s}}\le
A_{i-1}\max_{i-1\le s\le m}|t_{s}|^{{1/w_s}}+B_{i-1}
$$
for some $A_{i-1}$ and $B_{i-1}$ depending only on $i$. The
induction is complete.

Finally, note that we have shown that $\bar\si(g)\le A_1
\si(g)+B_1$ for all $g\in G$.

(3) Thirdly, we prove that $\si\lesssim\ell'$. Let $\|\cdot\|$ be
the norm on $\fg$ existing by Lemma~\ref{normtrick} (the value of
constant $C$ is specified below). Note that $\si(g)$ is equivalent
to the function $g\mapsto \max_j\|V_j\|^{1/j}$, where
$g=\exp(\sum_j V_j$) with $V_j\in\fv_j$. So it suffices to show
that $\ell'(g)=n$ implies $\|V_j\|\le n^j$ for all~$j$.

We proceed by induction. For $n=0$ and $n=1$ the claim is
obvious. Suppose that it holds for  $ n-1\ge 1$. If $\ell'(g)=n$,
then $g=g_1g_2$, where $\ell'(g_1)=1$ and $\ell'(g_2)= n-1$,
i.e., $g_1=\exp V_0$, where $V_0\in\fv_1$ with $\|V_0\|\le 1$ and
$g_2=\exp (\sum_j V_j)$, where $V_j\in\fv_j$ with $\|V_j\|\le
(n-1)^j$. Write $g_1g_2=\exp(\sum_{j=1}^k W_j)$, where
$W_j\in\fv_j$. We need to show that $\|W_j\|\le n^j$ for all $j$.

Note that, by the Baker-Campbell-Hausdorff  formula, there are
$\ga_U$ s.t.
\begin{equation}\label{corCBH}
V_0\ast(V_1+\cdots+V_K)=\exp\Bigl(V_0+V_1+\cdots+V_K+\sum_U \ga_U
V_U\Bigr)\,,
\end{equation}
for  any $V_0\in\fv_1$ and $V_j\in\fv_j$ ($j=1,\ldots,k$), where
$V_U\!:=[V_{u_1},[V_{u_2},\cdots, V_{u_p}]$ and
$U=(u_1,\ldots,u_p)$ runs all words in $\{0,1,\ldots,k\}$ of
length at least $2$ and at most $k$, and containing at least~$1$
of occurrence of~$0$.

Further, for each $U$, there is a unique decomposition
$$
V_U=\sum_{j=|U|}^k Y_{U,j}\,;\qquad(Y_{U,j}\in\fv_j)\,,
$$
where $|U|$ denotes the sum of $u_1+\cdots+u_p$ and the number of
occurrence of~$0$ in~$U$. We get from \eqref{corCBH} that
$W_1=V_0+V_1$ and for $j\ge 2$
\begin{equation}\label{WjVkS}
W_j=V_j+\sum_{U\in S_j} \ga_U Y_{U,j}\,,
\end{equation}
where $S_j$ denotes the set of words $U$ s.t. the length of $U$ is
at least $2$, $|U|\le j$, and $U$ contains at least~$1$ of
occurrence of~$0$.

By setting $C\!:= (\sum_{U} |\ga_U|)^{-1}$  in
Lemma~\ref{normtrick} we get
\begin{multline}\label{gaYUje}
\Bigl\|\sum_{U\in S_j} \ga_U Y_{U,j}\Bigr\|\le \sum_{U\in S_j} |\ga_U|\, \|Y_{U,j}\|\le\text{(by Part~(A))}\\
 \sum_{U\in S_j} |\ga_U|\,
\|V_{U}\|\le \text{(by Part~(B))} \\
\sum_{U\in S_j} |\ga_U|\, C \,\|V_{u_1}\|\,\|V_{u_2}\|\cdots \|V_{u_p}\| \le\\
\max_{U\in S_j}\{\|V_{u_1}\|\,\|V_{u_2}\|\cdots \|V_{u_p}\|\}\,.
\end{multline}
According to the inductive assumption, $\|V_{u_s}\|\le
(n-1)^{u_s}$ for all $u_s$. Since $U\in S_j$, we have
$u_1+\cdots+u_p< |U|\le j$. Therefore,
$$\|V_{u_1}\|\,\|V_{u_2}\|\cdots \|V_{u_p}\|\le (n-1)^{j-1}\,.$$
Finally, \eqref{WjVkS} and \eqref{gaYUje} imply $\|W_j\|\le
2(n-1)^{j-1}\le n^j$ for $j\ge 2$ and obviously $\|W_1\|\le
\|V_0\|+\|V_1\|\le n$.
\end{proof}

\subsection*{On left invariant Riemannian distances}
The following remarks explain why  Theorem~\ref{eelUsi} can be
reformulated in terms of left invariant (sub)-Riemannian
distances; thus, we have a direct connection with results
in~\cite{Be96,DER,Kar94,VSC92}.

We say that a  length function $\ell$ is \emph{symmetric} if
$\ell(e) = 0$ and $\ell(g^{-1})=\ell(g)$  for all $g\in G$.
Obviously, for given  length function $\ell$, the function $\ell'$
defined by
\begin{equation*}
\ell'(g)\!:=\max\{\ell(g),\,\ell(g^{-1})\},\quad(g\ne
e)\qquad\text{and}\quad \ell'(e)\!:=1
\end{equation*}
is a symmetric length function.

For a symmetric length function $\ell$ consider the following
condition:

$(\al)$ \emph{there exists $C$ such that, for each $g\in G$
satisfying $\ell(g) \ge 1$, there are $g_0,\ldots,g_n\in G$ s.t.
$g_0 = e$, $g_n = g$, and $\ell(g_i^{-1}g_{i+1}) \le 1$ $(i =
0,\ldots, n-1)$ with $n \le C\ell(g)$.}

It is trivial that the formulas ${ }_l d(g,h)\!:=\ell(g^{-1}h)$
and ${ }_d\ell(g)\!:=d(e,g)$ determine a bijection between the
set of symmetric length functions on $G$  and the set of locally
bounded left invariant distances on $G$. Under this
correspondence our condition ($\al$) becomes condition~$(C_3)$
from p.~40 of  \cite{VSC92}. In the terminology of
\cite[Def.~3.B.1]{CH16}, property~$(C_3)$ means that $d$ is
\emph{$1$-large-scale geodesic}.

\begin{pr}\label{slfeq}
Let $G$ be a locally compact group and let $\ell_1$ and $\ell_2$
be symmetric length functions on $G$ satisfying  to condition
$(\al)$. Suppose that $\ell_1$ is bounded on $\{g\in G:\,
\ell_2(g)\le 1\}$ and $\ell_2$ is bounded on $\{g\in G:\,
\ell_1(g)\le 1\}$. Then $\ell_1$ and $\ell_2$ are equivalent.
\end{pr}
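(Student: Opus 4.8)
The plan is to show that $\ell_1$ is dominated by $\ell_2$; interchanging the two functions then yields the reverse domination, and equivalence follows by definition. Put
$$
K\!:=\sup\{\ell_1(g):\,g\in G,\ \ell_2(g)\le 1\}\,,
$$
which is finite by the hypothesis that $\ell_1$ is bounded on $\{g:\ell_2(g)\le 1\}$.

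First I would treat a $g\in G$ with $\ell_2(g)\ge 1$. Condition $(\al)$ applied to $\ell_2$ gives points $g_0=e,g_1,\ldots,g_n=g$ with $\ell_2(g_i^{-1}g_{i+1})\le 1$ for $i=0,\ldots,n-1$ and $n\le C\ell_2(g)$. Writing $g$ as the telescoping product $g=(g_0^{-1}g_1)(g_1^{-1}g_2)\cdots(g_{n-1}^{-1}g_n)$ and using subadditivity of $\ell_1$ gives $\ell_1(g)\le\sum_{i=0}^{n-1}\ell_1(g_i^{-1}g_{i+1})$; since $\ell_2(g_i^{-1}g_{i+1})\le 1$ for each $i$, the definition of $K$ bounds every summand by $K$, so $\ell_1(g)\le nK\le CK\,\ell_2(g)$. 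On the other hand, for $g$ with $\ell_2(g)\le 1$ we have $\ell_1(g)\le K$ directly. Combining the two cases, $\ell_1(g)\le CK\,\ell_2(g)+K$ for all $g\in G$ (enlarging $CK$ and $K$ to positive numbers if necessary), i.e.\ $\ell_1$ is dominated by $\ell_2$.

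Running the same argument with $\ell_1$ and $\ell_2$ exchanged — using that $\ell_1$ satisfies $(\al)$ and that $\ell_2$ is bounded on $\{g:\ell_1(g)\le 1\}$ — shows that $\ell_2$ is dominated by $\ell_1$, hence $\ell_1$ and $\ell_2$ are equivalent. I do not expect a genuine obstacle here: the only points needing attention are that the boundedness hypotheses are precisely what converts the step bound $\ell_2(g_i^{-1}g_{i+1})\le 1$ into the uniform bound $\ell_1(g_i^{-1}g_{i+1})\le K$, and that the two regimes $\ell_2(g)\ge 1$ and $\ell_2(g)\le 1$ must be merged into a single affine estimate of the form required by the definition of domination.
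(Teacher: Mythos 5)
Your proof is correct, and it is essentially the standard chain/telescoping argument that the paper invokes only by reference (the paper's ``proof'' consists of citing [VSC92, Prop.~III.4.2 and Rem.~III.4.3], whose underlying argument is exactly your decomposition $g=(g_0^{-1}g_1)\cdots(g_{n-1}^{-1}g_n)$ combined with subadditivity and the boundedness hypotheses). Your write-up in fact supplies the details the paper omits, and correctly handles the merging of the two regimes into a single affine bound; note also that symmetry of $\ell_1,\ell_2$ is never actually used in this argument.
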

\begin{proof}
It is noted in \cite[Rem.~III.4.3]{VSC92} that the argument from
[ibid., Prop.~III.4.2] can be applied in our situation.
\end{proof}

Recall that every connected locally compact group is compactly
generated \cite[Th.~7.4]{HeRo}.

\begin{pr}\label{eUdl}
Let $G$ be a connected real Lie group. Suppose that $U$ is a
symmetric relatively compact generating set and $d$ is a distance
determined by a left invariant Riemannian metric. Then $\ell_U$
is equivalent (at infinity) to~${
}_d\ell$.
\end{pr}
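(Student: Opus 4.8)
The plan is to deduce the statement from Proposition~\ref{slfeq}, applied to the symmetric length functions $\ell_1:=\ell_U$ and $\ell_2:={}_d\ell$, where ${}_d\ell(g):=d(e,g)$. First I would check that both are indeed symmetric length functions. For $\ell_U$ this is immediate: since $U$ is symmetric, so is $U^n$ for every $n$, hence $g\in U^n\iff g^{-1}\in U^n$, giving $\ell_U(g^{-1})=\ell_U(g)$, and $\ell_U(e)=0$ because $e\in U^0$; local boundedness is part of $\ell_U$ being a length function. For ${}_d\ell$ one uses left invariance of $d$: translating by $g$ gives $d(e,g^{-1})=d(g,e)=d(e,g)$, while $d(e,gh)\le d(e,g)+d(g,gh)=d(e,g)+d(e,h)$, and local boundedness follows from continuity of $d$.

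Second, I would verify condition $(\alpha)$ for each function. For $\ell_U$ it holds with $C=1$: if $\ell_U(g)=n\ge 1$, write $g=u_1\cdots u_n$ with $u_i\in U$ and set $g_0:=e$, $g_i:=u_1\cdots u_i$; then $g_i^{-1}g_{i+1}=u_{i+1}\in U$, so $\ell_U(g_i^{-1}g_{i+1})\le 1$. For ${}_d\ell$, left invariance of $d$ gives $\ell(g_i^{-1}g_{i+1})=d(g_i,g_{i+1})$, so $(\alpha)$ becomes the assertion that $(G,d)$ is $1$-large-scale geodesic: given $g$ with $d(e,g)\ge 1$, choose a piecewise smooth path from $e$ to $g$ of length at most $d(e,g)+1$, parametrize it by arc length, and sample it at the parameter values $0,1,2,\dots$ together with the endpoint; consecutive samples are at $d$-distance $\le 1$ and their number is at most $d(e,g)+2\le 3\,d(e,g)$, so $C=3$ works. (Alternatively one may use a minimizing geodesic, which exists because a left-invariant Riemannian metric on a Lie group is complete.)

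Third, I would check the two mutual boundedness hypotheses of Proposition~\ref{slfeq}. The set $\{g:\ell_U(g)\le 1\}$ equals $U$, which is relatively compact, so the continuous function $d(e,\cdot)$ is bounded on it. Conversely, $\{g:{}_d\ell(g)\le 1\}$ is the closed $d$-ball of radius $1$; since $(G,d)$ is complete (left-invariant Riemannian metrics are complete, being homogeneous), Hopf--Rinow shows this ball is compact, hence contained in $U^N$ for some $N$ because $G=\bigcup_n U^n$, so $\ell_U\le N$ on it. Then Proposition~\ref{slfeq} yields that $\ell_U$ and ${}_d\ell$ are equivalent at infinity, which is the claim.

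The main obstacle is the second step for ${}_d\ell$: establishing property $(\alpha)$ (equivalently, that $d$ is $1$-large-scale geodesic) and the compactness of closed $d$-balls both rest on completeness of left-invariant Riemannian metrics, a standard but not entirely trivial input (the injectivity radius is bounded below by homogeneity, so geodesics extend indefinitely and Hopf--Rinow applies). Everything else is routine bookkeeping with words and rectifiable paths.
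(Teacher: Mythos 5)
Your proposal is correct and follows essentially the same route as the paper's proof: reduce to Proposition~\ref{slfeq}, verify condition $(\alpha)$ for $\ell_U$ with $C=1$ via the word decomposition, observe that $(\alpha)$ for ${}_d\ell$ follows from $d$ being defined as an infimum of lengths of piecewise smooth paths, and check the two mutual boundedness hypotheses. The only difference is that you fill in details (sampling a near-minimizing path by arc length, completeness of left-invariant Riemannian metrics and Hopf--Rinow for compactness of the unit $d$-ball) that the paper dismisses as easy to see.
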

\begin{proof}
It is easy to see that $\ell_U$ is bounded on $\{g\in G:\, {
}_d\ell(g)\le 1\}$ and ${ }_d\ell$ is bounded on $\{g\in
G:\,\ell_U(g)\le 1\}$. By Proposition~\ref{slfeq}, it suffices to
show that $\ell_U$ and~${ }_d\ell$ satisfy to $(\al)$.

Let $\ell_U(g)=n>0$ and fix $g=h_1\cdots h_n$ where $h_i\in U$.
Set $g_0=e$, and   $g_i=h_1\cdots h_i$ for $i=1,\ldots,n$. Then
$\ell_U(g_i^{-1}g_{i+1})=\ell(h_{i+1}) \le 1$ for $i<n$. Thus
$(\al)$ is satisfied for $\ell_U$ with $C=1$. Since $d$ is
defined as the infimum of lengths of piecewise smooth paths,
condition $(\al)$ is satisfied for ${ }_d\ell$.
\end{proof}

For a nilpotent Lie group $G$, the distance  $d$ in
Proposition~\ref{eUdl} can be replaced by a Carnot-Carath\'eodory
distance in the sense
of~\cite[Sect.~III.4]{VSC92}.


\begin{thebibliography}{CCJJVVXX}

\bibitem[Ak08]{Ak08}
S.~S.~Akbarov, {\em Holomorphic functions of exponential type and
duality for Stein groups with algebraic connected component of
identity}, Fundam. Prikl. Mat. \textbf{14}:1 (2008), 3--178;
English transl.:  J. Math. Sci. (2009) 162:459. https://
doi.org/10.1007/s10958-009-9646-1.

\bibitem[Ar19]{Ar19}
O.~Yu.~Aristov, \emph{Holomorphic functions of exponential type on
connected complex Lie groups}, arXiv:1903.08080 [math.RT], to
appear in J. Lie Theory.

\bibitem[Be96]{Be96}
A.~Bellal\"{\i}che, {\em The tangent space in sub-Riemannian
geometry}, Sub-Riemannian Geometry,  A.~Bellal\"{\i}che,
J.-J.~Risler (Eds.), Progress in Mathematics, \textbf{144},
(1996),  Birkhause , 4--84.

\bibitem[BD92]{BDi}
J.~Bonet, S.~Dierolf, {\em On distinguished Fr\'{e}chet spaces},
Progress  in  Functional  Analysis, K.D.  Bierstedt, J.~Bonet,
J.~Horvath,  M.~Maestre (Eds.) (1992), Elsevier  Science
Publishers B.V., 201--214.

\bibitem[CH16]{CH16}
Y.~Cornulier,  P.~de~la~Harpe, \emph{Metric geometry of locally
compact groups}, EMS Tracts in mathematics, \textbf{25},
Z\"{u}rich, 2016.

\bibitem[Da00]{Da00}
H. G.~Dales,  \emph{Banach algebras and automatic continuity}.
London Mathematical Society Monographs.  New Series, \textbf{24}.
Oxford Science Publications. The Clarendon Press, Oxford
University Press, New York, 2000.

\bibitem[Do03]{Do03}
A. A.~Dosiev (Dosi), {\em Homological dimensions of the algebra
formed by entire functions of elements of a nilpotent Lie
algebra}, Funct. Anal. Appl. \textbf{37}(1), (2003)
61--64.

\bibitem[Do09]{Do09}
A. A.~Dosiev (Dosi), {\em Local left invertibility for operator
tuples and noncommutative localizations}. J. K-Theory \textbf{4}
(2009), no. 1, 163--191.

\bibitem[Do10A]{Do10A}
A. A.~Dosi, {\em Taylor Functional Calculus for Supernilpotent
Lie Algebra of Operators} J. Operator Theory \textbf{63}:1
(2010), 191--216

\bibitem[Do10B]{Do10B}
A. A.~Dosiev (Dosi), {\em Formally-radical Functions in Elements
of a Nilpotent Lie Algebra and Noncommutative Localizations},
Algebra Colloq., \textbf{17}, Sp. Iss. 1 (2010), 749--788.

\bibitem[DER03]{DER}
N.~Dungey, A.~F.~M.~ter Elst, D.~W.~Robinson, \emph{Analysis on
Lie Groups with Polynomial Growth}.  Progress in Mathematics.
V.~\textbf{214}, Birkh\"{a}user Boston Inc., Boston, MA, 2003.

\bibitem[Dz86]{Dz86}
H.~A.~M.~Dzinotyiweyi, {\em Weighted Function Algebras on Groups
and Semigroups}, Bull. Austral. Math. Soc. \textbf{33} (1986),
307--318.

\bibitem[Fa72]{Favre}
G.~Favre, {\em Une alg\`ebre de Lie caract\'eristiquement
nilpotente de dimension $7$.} C. R. Acad. Sci. Paris Ser. A--B
\textbf{274} (1972), A1338--A1339.

\bibitem[Go78]{Go78}
R.~Goodman, {\em Filtrations and canonical coordinates on
nilpotent Lie groups}, Trans. Amer. Math. Soc. \textbf{237}
(1978), 189--204.

\bibitem[Go79]{Go79}
R.~Goodman, {\em Holomorphic representations of nilpotent Lie
groups}, J. Funct. Anal. \textbf{31} (1979), no. 1,
115--137.

\bibitem[GK96]{GoKh}
M.~Goze, Y.~Khakimdjanov, \emph{Nilpotent Lie Algebras},
Mathematics and Its Applications, V.~\textbf{361},  Springer,
Dordrecht, 1996.

\bibitem[He89]{X1}
A. Ya.~Helemskii,  \emph{The Homology of Banach and Topological
Algebras}, Moscow University Press, 1986 (Russian); English
transl.: Kluwer Academic Publishers, Dordrecht, 1989.

\bibitem[He93]{X2}
A. Ya.~Helemskii, \emph{Banach and Polynormed Algebras: General
Theory, Representations, Homology}, Nauka, Moscow, 1989
(Russian); English transl.: Oxford University Press, 1993.

\bibitem[He00]{X_HOA}
A. Ya.~Helemskii, {\em Homology for the Algebras of Analysis},
Handbook of algebra, Vol.~\textbf{2} (ed. M.~Hazewinkel),
Amsterdam, North-Holland, (2000) 151--274.

\bibitem[HR79]{HeRo}
E.~Hewitt, K.~A. Ross, \emph{Abstract Harmonic Analysis I},
Springer-Verlag, Berlin Heidelberg New York, 1979.

\bibitem[Ka94]{Kar94}
R.~Karidi, {\em Geometry of balls in nilpotent Lie groups}, Duke
Math. J. \textbf{74}, N.~2 (1994), 301--317.

\bibitem[Ko83]{Kot1}
G.~Kothe, \emph{Topological Vector Spaces I}, Grundlehren der
mathematischen Wissenschaften, V.~\textbf{159}, Springer-Verlag
Berlin Heidelberg, 1983.

\bibitem[LG86]{LG}
P.~Lelong, L.~Gruman, \emph{Entire Functions of Several Complex
Variables},  Grundlehren der mathematischen Wissenschaften,
V.~\textbf{282}, Springer-Verlag Berlin Heidelberg,  1986.


\bibitem[Li69]{Lit}
G.~L.~Litvinov, {On completely irreducible representations of
complex and real nilpotent Lie groups}, Funct. Anal. Appl.,
\textbf{3}:4 (1969), 332--334.

\bibitem[Ne00]{Nee}
K.-H. Neeb,  \emph{Holomorphy and Convexity in Lie Theory},  de
Gruyter Expositions in Mathematics, vol. \textbf{28}, de Gruyter,
Berlin and New York, 2000.

\bibitem[Pi06A]{Pir_stbflat}
 A. Yu. Pirkovskii, {\em Stably flat completions of universal enveloping algebras},
Dissertationes Math. (Rozprawy Math.) \textbf{441} (2006), 1--60.

\bibitem[Pi06B]{Pi4}
A. Yu. Pirkovskii, {\em Arens-Michael enveloping algebras and
analytic smash products}, Proc. Amer. Math. Soc. 134 (2006),
2621--2631. MR2213741 (2007b:46135)

\bibitem[Pi08]{Pir_qfree}
 A. Yu.Pirkovskii, {\em Arens-Michael envelopes, homological epimorphisms, and
relatively quasi-free algebras}. (Russian). Tr. Mosk. Mat. Obs. \textbf{69}
(2008), 34--125; translation in Trans. Moscow Math. Soc. 2008, 27--104.

\bibitem[SM99]{SM}
H.~H.~Schaefer, M.~P.~H.~Wolff. \emph{Topological Vector Spaces},
Springer Verlag, 1999.

\bibitem[Sc93]{Sch93}
L.~B.~Schweitzer, {\em Dense $m$-convex Fr\'{e}chet subalgebras
of operator algebra crossed products by Lie groups}. Internat. J.
Math., \textbf{4}(4) (1993), 601--673.

\bibitem[Ta72A]{T1}
 J.~L.~Taylor, {\em Homology and cohomology for topological algebras}, Adv. Math.
\textbf{9} (1972), 137--182.

\bibitem[Ta72B] {T2}
 J.~L.~Taylor, {\em A general framework for a multi-operator functional calculus},
Adv. Math. \textbf{9} (1972), 183--252.

\bibitem[Tu84]{Tu84}
 Yu.~V.~Turovskii, {\em On spectral properties of elements of normed
algebras and invariant subspaces}, Funktsional. Anal. i
Prilozhen. \textbf{18} (2) (1984), 77--78; English transl.:
Funct. Anal. Appl., \textbf{18}:2 (1984), 152--154.

\bibitem[VSC92]{VSC92}
N.~T.~Varopoulos, L.~Saloff-Coste, T.~Coulhon, \emph{Analysis and
geometry on groups}. Cambridge Tracts in Mathematics
\textbf{100}.  Cambridge University Press, Cambridge,
1992.

\bibitem[Wa72]{War72}
G.~Warner, \emph{Harmonic Analysis on Semi-Simple Lie Groups I},
Grundlehren der mathematischen Wissenschaften, V.~\textbf{188},
Springer-Verlag Berlin Heidelberg, 1972.

\bibitem[Wi13]{Wi13}
G.~Willis, \emph{Conjugation weights and weighted convolution
algebras on totally disconnected, locally compact groups}. AMSI
International Conference on Harmonic Analysis and Applications,
136--147, Proc. Centre Math. Appl. Austral. Nat. Univ.,
\textbf{45}, Austral. Nat. Univ., Canberra, 2013.


\end{thebibliography}
\end{document}